\newcommand{\R}{\mathbb R}
\newcommand{\E}{\mathbb E}
\newcommand{\uP}{\mathbb P}
\newcommand{\ud}{\mathrm d}
\newcommand{\Var}{\mathrm{Var}}
\newcommand{\Cov}{\mathrm{Cov}}
\newtheorem{Def}{Definition}[section]
\newtheorem{lem}[Def]{Lemma}
\newtheorem{tho}[Def]{Theorem}
\newtheorem{prop}[Def]{Proposition}
\newtheorem{rem}[Def]{Remark}
\newtheorem{cor}[Def]{Corollary}
\begin{document}

\begin{frontmatter}

\title{Influences of Numerical Discretizations on Hitting Probabilities for Linear Stochastic Parabolic System}

%
%
%


\author[mymainaddress]{Chuchu Chen}
\ead{chenchuchu@lsec.cc.ac.cn}
\author[mymainaddress]{Jialin Hong}
\ead{hjl@lsec.cc.ac.cn}

\author[mymainaddress]{Derui Sheng\corref{mycorrespondingauthor}}
\cortext[mycorrespondingauthor]{Corresponding author}
\ead{sdr@lsec.cc.ac.cn}

\address[mymainaddress]{1. LSEC, ICMSEC, 
Academy of Mathematics and Systems Science, Chinese Academy of Sciences, 
Beijing, 100190, China\\ 
2. School of Mathematical Science, University of Chinese Academy of Sciences, Beijing, 100049, China}

\begin{abstract}
This paper investigates the influences of standard numerical discretizations on hitting probabilities for linear stochastic parabolic system driven by space-time white noises.
 We establish  lower and upper bounds for hitting probabilities of the associated numerical solutions of both temporal and spatial semi-discretizations in terms of  Bessel-Riesz capacity and Hausdorff measure, respectively. Moreover, the critical dimensions of  both  temporal  and spatial semi-discretizations turn out to be  half of those of the exact solution. 
This reveals that for a large class of Borel sets $A$, the probability of the event that the paths of the numerical solution hit $A$ cannot converge to that of the exact solution.

\end{abstract}

\begin{keyword}
hitting probability, numerical discretization,  stochastic parabolic system, critical dimension
\MSC[2010] 60H35\sep  65C30\sep 60J45
\end{keyword}

\end{frontmatter}

\linenumbers

\section{Introduction}

Hitting probability is an active field of probability potential theory. Generally speaking, for an $\R^d$-valued random field  $X=\{X(x),x\in \R^m\}$, its
hitting probability concerns the lower and upper bounds of $\mathbb{P}\{X(I)\cap A\neq \emptyset\}$, where $I\subset\R^m$ is a fixed compact set with positive Lebesgue measure, and   $A\subset \R^d$ is a Borel set.  
When $X$ is the solution of a system of SPDEs, the hitting probability of $X$ has been  studied extensively (see e.g., \cite{CA14,DKN07,DKN09,DKN13,DP21,DS10,DS15,HLS16,MT02,NV09,SV18}), which is usually bounded by the Bessel-Riesz capacity and Hausdorff measure of $A$, namely,
\begin{equation}\label{CH}
c\operatorname{Cap}_{d-Q}(A)  \le \uP\left\{X(I)\cap A \neq \emptyset\right\}  \le C \mathscr{H}_{d-Q}(A),
\end{equation}
for some $Q>0$ and $c,C>0$.  It is well known that the \textit{critical dimension} $Q$ is an important parameter that is highly related to the polarity of a Borel set $A$.  
When $X$ is approximated by a perturbation, particularly by a numerical solution  provided that $X$ is the exact solution of a system of SPDEs, a natural question is what are influences of the perturbation on critical dimensions of hitting probabilities.
 To the best of our knowledge, there is no result on this problem.

In this paper, we investigate influences on hitting probabilities of numerical discretizations for the following  linear stochastic parabolic system 
\begin{equation}\label{heat}
\left\{
\begin{split}
&\partial_tu^j(t,x)-\partial_{xx}u^j(t,x)=\dot{W}^j(t,x),\\
&u^j(t,0)=u^j(t,1)=0,~t\in[0,T],\\
&u^j(0,x)=0,~x\in[0,1],
\end{split}
\right.
\end{equation}
for $ j=1,\ldots,d$, 
where $T>0$, $u(t,x)=(u^1(t,x),\ldots,u^N(t,x))$, and $\{W^k\}_{k=1,\ldots,d}$ are $d$ independent Brownian sheets on some filtered probability space
$(\Omega,\mathscr F,\{\mathscr F_t\}_{t\ge 0}, \mathbb P)$. The hitting probabilities of the exact solution for system \eqref{heat} are well established by \cite{DKN07,BLX09}, which suggests that the critical dimensions in time and space directions are respectively $4$ and $2$, 
 that is for $A\in\mathscr B(\R^d)$,
\begin{gather*}
   c\operatorname{Cap}_{d-4}(A)  \le \uP\left\{u([T_0,T]\times \{x\})\cap A \neq \emptyset\right\}  \le C\mathscr{H}_{d-4}(A),\\
  c\operatorname{Cap}_{d-2}(A)  \le \uP\left\{u(\{t\}\times [\epsilon,1-\epsilon])\cap A \neq \emptyset\right\}  \le C\mathscr{H}_{d-2}(A).
 \end{gather*}
Here and after, $T_0\in(0,T)$ and $0<\epsilon\ll 1$ are two fixed numbers, and $c$, $C$ are generic positive constants that may differ from one place to another. The main result of this paper reveals that the critical dimensions of both temporal and spatial semi-discretizations are half of those of the exact solution. This indicates that for a large class of Borel sets $A$, the probability of the event that paths of the numerical solution hit $A$ cannot converge to that of the exact solution.

For the spatial semi-discretization of system \eqref{heat}, we introduce the finite difference method (FDM) and the spectral Galerkin method (SGM), and formulate
the corresponding numerical solutions $u^{j,N}(t,x)$ as stochastic integrals associated with discrete Green functions. We find that for any fixed space grid point $x_i=i/N\in[\epsilon,1-\epsilon]$, $U^N(t,x_i)=\big(u^{1,N}(t,x_i),\ldots,u^{d,N}(t,x_i)\big)$ given by FDM is H\"older continuous with respect to $t\in[T_0,T]$ with the optimal H\"older exponent $\frac{1}{2}$, which is crucial to conclude that the critical dimension associated to the time direction of FDM  is $2$. 
More precisely,  for $A\in\mathscr B(\R^d)$,
\begin{equation*}
c\operatorname{Cap}_{d-2}(A)  \le \uP\left\{U^N([T_0,T]\times\{x_i\})\cap A \neq \emptyset\right\}  \le C\mathscr{H}_{d-2}(A),
\end{equation*}
which can also be extended to the case of SGM.
 By noticing that $U^N(t,x)$ based on SGM is still a two-variable random field indexed by $(t,x)\in[0,T]\times[0,1]$, we further investigate its critical dimension in space direction. 
The main difficulty lies in  establishing lower bounds for H\"older continuity and the conditional variance in terms of the associated canonical metric $|x-y|$, which is overcome by refined estimates of the discrete Green function. These yield that the critical dimension associated to the space direction of SGM is $1$, i.e.,  for $A\in\mathscr B(\R^d)$,
$$c\operatorname{Cap}_{d-1}(A)  \le \uP\left\{U^N(\{t\}\times [\epsilon,1-\epsilon])\cap A \neq \emptyset\right\}  \le C \mathscr{H}_{d-1}(A).$$

For the temporal semi-discretization of system \eqref{heat}, we apply the exponential Euler method (EEM) with time stepsize $T/M$, $M\in\mathbb{N}_+$.
For any fixed time grid point $t_j=j\frac{T}{M}\in[T_0,T]$, the numerical solution $U_M(t_j,\cdot)$ of EEM is smoother than the exact solution  since  the temporal discretization avoids the treatment of the singularity of the Green function $G_t(x,y)$ near $t=0$. Actually,  making use of this property, we show that $U_M(t_j,x)$ is  Lipschitz continuous with respect to $x\in[\epsilon,1-\epsilon]$, and $1$ is exactly the optimal H\"older exponent. As a consequence,  the critical dimension associated to the space direction of EEM is also $1$:
\begin{equation*}
c\operatorname{Cap}_{d-1}(A)  \le \mathbb{P}\left\{U_M(\{t_j\}\times[\epsilon,1-\epsilon])\cap A \neq \emptyset\right\}  \le C\mathscr{H}_{d-1}(A),~\forall\,A\in\mathscr B(\R^d).
\end{equation*}
We remark that for the continuous EEM numerical solution $U_M(t,x)$, we can only obtain the 
upper bound of hitting probabilities in time direction in terms of Hausdorff measure since $U_M(t,x)$ is  smoother in every subinterval $(t_i, t_{i+1})$ than in grid points. 
It is worth mentioning that different from the infinite dimensional case, the continuous EEM numerical solution for the system of finite dimensional Ornstein--Ulenbeck equations preserves the critical dimension of the exact solution of the original system.

The rest of this paper is organized as follows.  Section \ref{S2} states some preliminaries, including the model, numerical discretizations and  the criteria on hitting probabilities for Gaussian random fields. Our main results on the upper and lower bounds for hitting probabilities of semi-discretizations are presented in Section \ref{S3}. Detailed proofs are postponed to Section \ref{S4}.

\section{Preliminaries}\label{S2}
In this section, we introduce the model  and its numerical discretizations, and  the criteria on hitting probabilities for Gaussian random fields.
 
\subsection{The model}
Consider the following linear stochastic  parabolic equation:
\begin{equation}\label{heatv0}
\left\{
\begin{split}
&\partial_tv(t,x)-\partial_{xx}v(t,x)=\dot{W}(t,x),\\
&v(t,0)=v(t,1)=0,~t\in[0,T],\\
&v(0,x)=0,~x\in[0,1],
\end{split}
\right.
\end{equation}
where
$W$ is a Brownian sheet on $[0,T]\times[0,1]$.
Then \begin{align}\label{Heatw}
v(t,x)=\int_0^t\int_0^1G_{t-r}(x,z)W(\ud r\ud z),~(t,x)\in[0,T]\times[0,1],
\end{align}
and the components $u^j,\,j=1,\ldots,d$, in \eqref{heat} are independent copies of $v$.
Here, the Green function $G$  has the expression
\begin{align}\label{GD0}
G_{t}(x,y)=\sum_{k=1}^\infty e^{-\pi^2 k^2t}e_k(x)e_k(y)
\end{align}
with $e_k(x)=\sqrt{2}\sin(k\pi x)$, $k\ge 1$.
 We recall 
another equivalent formulation
 \begin{align*}
 G_{t}(x, y)=
      \frac{1}{\sqrt{4 \pi t}} \sum_{n=-\infty}^{+\infty}\left(e^{-\frac{(x-y-2 n)^{2}}{4 t}}-e^{-\frac{(x+y-2 n)^{2}}{4 t}}\right),
       \end{align*}  
 from which, one can observe that
  \begin{align}\label{GHP}
  G_t(x,y)=P_t(x,y)+H_t(x,y)
  \end{align}
   with $P_t(x,y)=\frac{1}{\sqrt{4\pi t}}e^{-\frac{(x-y)^{2}}{4t}}$ being the heat kernel on $\R$ and $H_t(x,y)$ being a smooth function on $[0,T]\times(0,1)^2$ (see e.g., \cite[Corollary 3.4]{WJ86}). We also recall that for any $0<\epsilon\ll 1$,
   \begin{equation}\label{PGP}
     \frac{1}{C}P_t(x,y)\le G_t(x,y)\le CP_t(x,y),~\forall\,x,y\in [\epsilon,1-\epsilon],\,t\in[T_0,T],
   \end{equation}
   for some $C:=C(T,\epsilon)>0$,
and refer  to \cite[Appendix]{BP98} for more properties on $G$.
\subsection{Numerical methods}
In this part, we give some spatial and temporal discretizations  for the linear stochastic  parabolic system \eqref{heat}.

\subsubsection{Spatial discretizations}

For spatial discretizations of \eqref{heat}, we introduce the spectral Galerkin method and the finite difference method. Their numerical solutions can be written as
\begin{equation}\label{Heatw1}
v^N(t,x)=\int_0^t\int_0^{1}G^N_{t-r}(x,z)W(\ud r\ud z),~(t,x)\in (0,T]\times[0,1],
\end{equation}  
where $G^N$ is given in \eqref{NGDG} for SGM and in \eqref{NGDF} for FDM.
The associated numerical solution to \eqref{heat} is denoted by
\begin{equation}\label{UNTX}
U^{N}(t,x)=\big(u^{1,N}(t,x),\ldots,u^{d,N}(t,x)\big),
\end{equation}
where $u^{j,N}(t,x)$ are generated via replacing $W$  in \eqref{Heatw1} by $W^j,\,j=1,\ldots,d$.

\textit{Spectral Galerkin method:} 
 Recalling \eqref{heatv0}, we denote $v(t)=v(t,\cdot)\in H:=L^2(0,1)$.
  Then $v(t)$ satisfies the following infinite dimensional evolution equation
\begin{equation*}
\ud v(t)=\Delta v(t)\ud t+\ud V_t,~v(0)=0
\end{equation*}
in distribution sense,  where $\Delta$ is the Dirichlet Laplacian, and $V_t=\sum_{k=1}^\infty\beta_k(t)e_k$ is a cylindrical Wiener process on $H$. Here, $\{\beta_k(t),t\ge0\}_{k\ge1}$ is a sequence of independent 
 Brownian motions on $(\Omega,\mathscr F,\{\mathscr F_t\}_{t\ge 0}, \mathbb P)$.

 For $N\ge 1$, define $\mathbb{Z}_N=\{1,\ldots,N-1\}$.
We introduce the finite dimensional subspace $H_N:=\textrm{span}\{e_k,\,k\in\mathbb{Z}_N\}$ of $H$  and the projection operator $P_N: H\rightarrow H_N$ given by $P_Nh=\sum_{k=1}^{N-1}\langle e_k,h\rangle_He_k,\,h\in H$. Then 
the spectral Galerkin approximation (see e.g, \cite{CHL17,CHS21}) for \eqref{heatv0} in  $H_N$ is formulated as
$$\ud v_t^N=P_N\Delta v_t^N\ud t+P_N\ud V_t,~v_0^N=0,$$
which is equivalent to a system of stochastic differential equations:
$$\ud\langle v_t^N,e_k\rangle_H=-k^2\pi^2\langle v_t^N,e_k\rangle_H\ud t+\ud\beta_k(t),~\langle v_0^N,e_k\rangle_H=0,~k\in\mathbb{Z}_N.$$
Noticing $\sum_{k=1}^{N-1}\langle v^N_t,e_k\rangle_H  e_k(x)=v^N(t,x)$ in distribution sense, one can  verify that 
\begin{equation*}
v^N(t,x)=\int_0^t\int_0^{1}G^N_{t-r}(x,z)W(\ud r\ud z),~(t,x)\in (0,T]\times[0,1],
\end{equation*}  
where
the discrete heat kernel associated to SGM is 
\begin{align}\label{NGDG}
G^N_t(x,y)=\sum_{k=1}^{N-1}\exp(-k^2\pi^2t)e_k(x)e_k(y).
\end{align}

\textit{Finite difference method:}
Using the central difference, the finite difference method of \eqref{heatv0} is proposed in \cite{GI98}. The associated numerical solution is 
\begin{equation*}
v^N(t,x_i)=\int_0^t\int_0^{1}\sum_{k=1}^{N-1}\exp(\lambda_k^Nt)e_k(x_i)e_k(\kappa_N(z))W(\ud r\ud z),~(t,x)\in (0,T]\times[0,1],
\end{equation*}  
where $x_i=\frac{i}{N}$, $\lambda_k^N=-4N^2\sin^2\left(\frac{k}{2N}\pi\right)$ and $\kappa_N(z):=\frac{[Nz]}{N}$. We remark that $-k^2\pi^2\le\lambda_k^N\le-4k^2,$ $\forall\,k\in\mathbb{Z}_N$. 
By the linear interpolation, we obtain the continuous numerical solution  \eqref{Heatw1} of FDM,
where the associated discrete Green function is
\begin{align}\label{NGDF}
G^N_t(x,y)=\sum_{k=1}^{N-1}\exp(\lambda_k^Nt)e^N_k(x)e_k(\kappa_N(y))
\end{align}
with
\begin{equation}\label{sinN0}
e_k^N(x):=e_k\left(\kappa_N(x)\right)+N(x-\kappa_N(x))\left[e_k\left(\kappa_N(x)+\frac{1}{N}\right)-e_k\left(\kappa_N(x)\right)\right],~x\in[0,1].
\end{equation}

\subsubsection{Temporal discretization}
Introducing a time stepsize $\delta t=\frac{T}{M},$ $M\in \mathbb N_+$, the numerical solution of the exponential Euler method for \eqref{heatv0} is given by
\begin{align*}
v_M(t_{i},x)=\int_0^1G_{\delta t}(x,z)v_M(t_{i-1},z)\ud z+\int_{t_{i-1}}^{t_{i}}\int_0^1G_{\delta t}(x,z)W(\ud z\ud r),
\end{align*}
where $t_i=i\delta t$, $i\in\mathbb{Z}_{M+1}=\{1,\ldots,M\}$.
Hence for $i\in\mathbb{Z}_{M+1}$,
\begin{equation}\label{HeatM}
v_M(t_{i},x)=\int_0^{t_{i}}\int_0^1G_{t_i-[\frac{r}{\delta t}]\delta t}(x,z)W(\ud z\ud r).
\end{equation}
The associated numerical solution of system \eqref{heat} is denoted by 
\begin{equation}\label{UMTX}
U_{M}(t_i,x)=\big(u_{M}^1(t_i,x),\ldots,u_{M}^d(t_i,x)\big),~i\in\mathbb{Z}_{M+1},
\end{equation}
 where $u_{M}^j(t_i,x)$ is generated by replacing $W$ in \eqref{HeatM} by $W^j$ for $j\in\{1,\ldots,d\}$.


 \subsection{Hitting probability}\label{S2.3}

Given two random variables $X$ and $Y$, we denote $\Var X:=\E|X-\E X|^2$ and 
$\Cov(X,Y):=\E[(X-\E X)(Y-\E Y)]$. 
For any Borel set  $F \subset \R^{d}$,  define  $\mathcal P(F)$  to be the set of all probability measures with compact support in  $F$.  For $\mu \in \mathcal P(\R^{d})$,  let  $I_{\beta}(\mu)$  denote the  $\beta$-dimensional energy of  $\mu$, i.e.,
$$I_{\beta}(\mu):=\iint \mathrm{K}_{\beta}(\|x-y\|) \mu(\ud x) \mu(\ud y),$$
where  $\|x\|$  denotes the Euclidean norm of  $x \in \R^{d}$, and
\begin{equation*}
\mathrm{K}_{\beta}(r):=\left\{\begin{array}{ll}
r^{-\beta} & \text { if } \beta>0, \\
\log \left(\frac{e}{r\wedge1}\right) & \text { if } \beta=0, \\
1 & \text { if } \beta<0.
\end{array}\right.
\end{equation*}
For any  $\beta \in\R$ and Borel set  $F \subset\R^{d}$, 
$\operatorname{Cap}_{\beta}(F)$  denotes the  $\beta$-dimensional Bessel--Riesz capacity of  $F$, that is,
$$\operatorname{Cap}_{\beta}(F):=\left[\inf _{\mu \in \mathcal{P}(F)} I_{\beta}(\mu)\right]^{-1},$$ where $1/\infty:=0.$ Given  $\beta \ge 0$,  the  $\beta$-dimensional Hausdorff measure of  $F$  is defined by
$$\mathscr{H}_{\beta}(F)=\lim _{\epsilon \rightarrow 0^{+}} \inf \left\{\sum_{i=1}^{\infty}\left(2 r_{i}\right)^{\beta}: F \subset \bigcup_{i=1}^{\infty} B\left(x_{i}, r_{i}\right), \sup _{i \geq 1} r_{i} \leq \epsilon\right\},$$
where  $B(x, r)$  denotes the open Euclidean ball of radius  $r>0$  centered at  $x \in \R^{d}$.  When  $\beta<0$,   $\mathscr{H}_{\beta}(F)$ is defined to be infinite.

 Based on the Bessel-Riesz capacity and Hausdorff measure, we present the lower and upper bounds for hitting probabilities of the exact solution of \eqref{heat}.  

 \begin{tho}\cite[Theorem 4.6]{DKN07}\label{heat-hit}
 Let $u(t,x)=(u^1(t,x),\ldots,u^d(t,x))$ be the exact solution of \eqref{heat}. Fix $L>0$, $T_0\in(0,T)$, and $0<\epsilon\ll 1$.
Then for any sufficiently small $\delta>0$,
there exist $C_i=C_i(\delta,T_0,T,d,\epsilon,L),\,i=1,\ldots,6$ such that for each compact set $A\subset[-L,L]^d$,
\begin{align*}
&C_1\operatorname{Cap}_{d-6}(A)  \le \uP\left\{u([T_0,T]\times [\epsilon,1-\epsilon])\cap A \neq \emptyset\right\}  \le C_2 \mathscr{H}_{d-6-\delta}(A),\\
 & C_3\operatorname{Cap}_{d-2}(A)  \le \uP\left\{u(\{t\}\times [\epsilon,1-\epsilon])\cap A \neq \emptyset\right\}  \le C_4 \mathscr{H}_{d-2-\delta}(A),\\
 &  C_5\operatorname{Cap}_{d-4}(A)  \le \uP\left\{u([T_0,T]\times \{x\})\cap A \neq \emptyset\right\}  \le C_6 \mathscr{H}_{d-4-\delta}(A),
 \end{align*}
 where $t\in[T_0,T]$ and $x\in[\epsilon,1-\epsilon]$.
 \end{tho}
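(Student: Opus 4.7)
The plan is to reduce all three hitting probability bounds to a single abstract criterion for vector-valued Gaussian random fields with i.i.d. components, and then to verify its hypotheses for $v$ defined in \eqref{Heatw}. Since $u=(u^1,\ldots,u^d)$ has independent coordinates all distributed as $v$, it suffices to establish (i) matching two-sided bounds on the canonical metric
$$d_v\bigl((t,x),(s,y)\bigr):=\bigl(\E|v(t,x)-v(s,y)|^2\bigr)^{1/2}$$
on the compact parameter set, and (ii) a one-point non-degeneracy estimate $\Var(v(t,x)\mid v(s,y))\ge c\,d_v((t,x),(s,y))^2$. With (i) and (ii) in hand, the Hausdorff upper bound and the Bessel--Riesz lower bound follow from the standard Gaussian machinery (covering arguments and a second moment method on a suitable measure on the hitting set), with the critical dimension identified as $Q=\sum_i H_i^{-1}$, where the $H_i$ are the optimal H\"older exponents of the field on the chosen index set.

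For Step (i), I would use the Walsh isometry to write
$$d_v((t,x),(s,y))^2=\int_0^{s\wedge t}\!\int_0^1\bigl(G_{t-r}(x,z)-G_{s-r}(y,z)\bigr)^2\ud z\,\ud r+\int_{s\wedge t}^{s\vee t}\!\int_0^1 G_{(s\vee t)-r}(\cdot,z)^2\ud z\,\ud r,$$
and decompose $G=P+H$ as in \eqref{GHP}. The smooth correction $H$ contributes at most a Lipschitz-in-$(t,x)$ term which is absorbed, while the $P$-part is treated by explicit Gaussian kernel computations on $\R$ after reducing to the diagonal case. This should give
$$c\bigl(|t-s|^{1/2}+|x-y|\bigr)\;\le\; d_v((t,x),(s,y))^2\;\le\; C\bigl(|t-s|^{1/2}+|x-y|\bigr)$$
uniformly on $[T_0,T]\times[\epsilon,1-\epsilon]$; the exponents $1/4$ in $t$ and $1/2$ in $x$ on the RHS are consistent with Theorem~\ref{heat-hit}. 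The lower bound is the delicate point: one localises the $z$-integral to a strip of width $\sqrt{|t-s|}$ around $x$ (or $y$) to capture the temporal contribution, uses \eqref{PGP} to pass from $G$ to the Gaussian kernel $P$, and exploits orthogonality in the $z$-integration to extract the spatial contribution. Step (ii) then follows from the identity $\Var(v(t,x)\mid v(s,y))=\Var v(t,x)-\Cov(v(t,x),v(s,y))^2/\Var v(s,y)$ combined with the parallelogram identity, once one notes that $\Var v$ is bounded above and below on the chosen rectangle.

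With the two-sided canonical-metric bound and one-point non-degeneracy, applying the abstract Gaussian criterion to the three index sets $[T_0,T]\times[\epsilon,1-\epsilon]$, $\{t\}\times[\epsilon,1-\epsilon]$, and $[T_0,T]\times\{x\}$ yields critical dimensions $Q=4+2=6$, $Q=2$, and $Q=4$, respectively, matching the three pairs of inequalities in Theorem~\ref{heat-hit}; the $\delta>0$ loss in the Hausdorff exponent comes from the dyadic covering in the upper bound. The hardest part will be the sharp \emph{lower} bound on $d_v$ in Step (i): one must quantitatively separate the temporal and spatial increments on an equal footing, control the interaction between the singular heat part $P_t$ and the smooth Dirichlet correction $H_t$, and keep all constants uniform in $(t,x),(s,y)\in[T_0,T]\times[\epsilon,1-\epsilon]$. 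Once this bound is secured, the conditional-variance estimate and the capacity/Hausdorff inequalities are essentially a mechanical application of standard Gaussian hitting theory.
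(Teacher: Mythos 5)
This theorem is quoted from \cite[Theorem 4.6]{DKN07} and the paper gives no proof of it, so the comparison is with the cited source. Your strategy --- verify the two-sided canonical-metric bound $\E|v(t,x)-v(s,y)|^2\asymp|t-s|^{1/2}+|x-y|$ on $[T_0,T]\times[\epsilon,1-\epsilon]$ plus the conditional-variance lower bound, then invoke the abstract Gaussian criterion of \cite{BLX09} (Theorem~\ref{Gausshit}) on the three index sets to get $Q=6,2,4$ --- is sound, and it is a genuinely different route from \cite{DKN07}: there the result is proved for \emph{nonlinear} heat systems via Malliavin-calculus density estimates and an anisotropic covering argument, which is precisely where the $\delta$-loss in the Hausdorff exponents originates. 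Your Gaussian route is in fact the one this paper advocates in the remark following the theorem (and uses throughout Section~\ref{S4} for the numerical solutions); it buys the sharper conclusion with $\mathscr H_{d-Q}$ in place of $\mathscr H_{d-Q-\delta}$, at the price of applying only to the linear (Gaussian) case. Two points to tighten. First, your attribution of the $\delta$ to ``the dyadic covering in the upper bound'' is off: Theorem~\ref{Gausshit} yields the upper bound with no loss, so your argument proves a strictly stronger statement than the one quoted. Second, in Step (ii) it is not enough that $\Var v$ is ``bounded above and below'': to pass from $\mathsf{(C1)}$ to $\mathsf{(C2)}$ via \eqref{VarYZ} you must control $|\sigma_Y-\sigma_Z|$ against $\rho_{Y,Z}$, which requires the variance function $(t,x)\mapsto\Var v(t,x)$ to be H\"older of exponent strictly larger than $(H_1,H_2)=(1/4,1/2)$ componentwise, as in Remark~\ref{rem1}; this holds here because $\Var v(t,x)=\int_0^tG_{2r}(x,x)\,\ud r$ is Lipschitz on $[T_0,T]\times[\epsilon,1-\epsilon]$, but that regularity must be stated and checked rather than mere boundedness.
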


 The  infinitesimal factor $\delta$ in Theorem \ref{heat-hit} can be removed by applying the following criteria about hitting probabilities of a general Gaussian random field, which also can  be applied to semi-discretizations of \eqref{heat}.

\begin{tho}\cite[Theorem 2.1]{BLX09}\label{Gausshit}
Let $I=[a,b]:=\prod_{j=1}^m[a_j,b_j]$ $(a_j<b_j)$ be an interval or a rectangle in $\R^m$ and $X=\left\{X(x),\,x \in \mathbb{R}^{m}\right\}$  be an $\R^d$-valued Gaussian random field with coordinate processes  $X_{1}, \ldots, X_{d}$  being independent copies of a real-valued, centered Gaussian random field  $X_{0}=\left\{X_{0}(x),\, x \in \mathbb{R}^{m}\right\}$. Assume that the following conditions hold:


$\mathsf{(C0)}$ for all  $x \in I$, $\mathbb{E}|X_{0}(x)|^{2}\ge c_{1}$;

$\mathsf{(C1)}$ 
there exists $H=(H_1,\ldots,H_m)\in(0,1)^m$  such that for all $x, y \in I,$
$$c_{2} \sum_{j=1}^{m}\left|x_{j}-y_{j}\right|^{2 H_{j}}  \le \mathbb{E}\left|X_{0}(x)-X_{0}(y)\right|^{2} \le c_{3} \sum_{j=1}^{m}\left|x_{j}-y_{j}\right|^{2 H_{j}};$$

$\mathsf{(C2)}$  for all  $x,y \in I,$ 
\begin{equation*}\label{C2}
\operatorname{Var}\left(X_{0}(x) | X_{0}(y)\right) \ge c_{4} \sum_{j=1}^{m}\left|x_{j}-y_{j}\right|^{2 H_{j}}.
\end{equation*}
Here, $c_i,~i=1,2,3,4$, are positive constants independent of $x,y\in I$, and $\operatorname{Var}\left(X_{0}(x) | X_{0}(y)\right) $ denotes the conditional variance of  $X_{0}(x)$  given  $X_{0}(y)$. Then there exist positive constants $c_5$, $c_6$ such that for every Borel set $A$ in $\mathbb{R}^{d}$,
\begin{equation}\label{Cahu}
c_{5} \operatorname{Cap}_{d-Q}(A)  \le \mathbb{P}\left\{X(I)\cap A \neq \emptyset\right\}  \le c_{6} \mathscr{H}_{d-Q}(A),
\end{equation}
where $Q:=   \sum_{j=1}^{m} 1 / H_{j}$.
\end{tho}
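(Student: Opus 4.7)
The plan is to prove the two inequalities separately by standard potential-theoretic machinery for Gaussian fields, with all estimates expressed in the anisotropic canonical metric
\[
\rho(x,y) := \Bigl(\sum_{j=1}^{m}|x_j-y_j|^{2H_j}\Bigr)^{1/2},
\]
which, by (C1), is comparable to the intrinsic $L^{2}$-metric of $X_0$. The essential geometric input is that the $\rho$-ball of radius $\epsilon$ in $\R^m$ has Lebesgue volume $\asymp \prod_j \epsilon^{1/H_j}=\epsilon^{Q}$, so $I$ can be partitioned into $\asymp \epsilon^{-Q}$ subrectangles each of $\rho$-diameter $\epsilon$; this is precisely how the exponent $Q=\sum_j 1/H_j$ enters both the capacity and the Hausdorff sides.

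For the upper bound I would fix $r>0$, cover $A$ by Euclidean balls $B(z_k,r_k)$ with $r_k\le r$ whose $(d-Q)$-weights approximate $\mathscr{H}_{d-Q}(A)$, and partition $I$ into $\asymp r_k^{-Q}$ subrectangles $R_\ell$ of $\rho$-diameter $r_k$ centred at $x_\ell$. Since $X_0$ is centred with $\operatorname{Var}(X_0(x))$ bounded above and below on $I$ by (C0) and (C1), the Gaussian density of $X(x_\ell)$ is uniformly bounded, giving $\mathbb{P}\{X(x_\ell)\in B(z_k,2r_k)\}\le C r_k^{d}$. Combining this with a chaining/Borell--Sudakov--Tsirelson control of $\sup_{x\in R_\ell}\|X(x)-X(x_\ell)\|$ in the $\rho$-metric yields $\mathbb{P}\{X(R_\ell)\cap B(z_k, r_k)\neq\emptyset\}\le C r_k^{d}$. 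Summing over $\ell$ gives $C r_k^{d-Q}$ per ball, and then summing over $k$ and letting $r\downarrow 0$ produces $\mathbb{P}\{X(I)\cap A\neq\emptyset\}\le C\mathscr{H}_{d-Q}(A)$.

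For the lower bound the second moment method applies. Given $\mu\in\mathcal{P}(A)$ with $I_{d-Q}(\mu)<\infty$, set
\[
J_\epsilon := \int_I \int_{\R^d}(2\pi\epsilon)^{-d/2}\exp\bigl(-\|X(x)-z\|^2/(2\epsilon)\bigr)\,\mu(\ud z)\,\ud x.
\]
Because $X(x)$ is centred Gaussian with bounded variance, its density is bounded below on the compact support of $\mu$, so $\mathbb{E}[J_\epsilon]\ge c_\ast>0$ uniformly in $\epsilon$. For the second moment, the joint density of $(X(x),X(y))\in\R^{2d}$ factorises across the $d$ independent components, each with covariance matrix $\Sigma_{x,y}$ satisfying $\det\Sigma_{x,y}=\operatorname{Var}(X_0(y))\operatorname{Var}(X_0(x)\,|\,X_0(y))\ge c\,\rho(x,y)^{2}$ by (C0) and (C2). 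This delivers the pointwise bound
\[
p_{(X(x),X(y))}(z,w)\le C\rho(x,y)^{-d}\exp\bigl(-c\|z-w\|^{2}/\rho(x,y)^{2}\bigr).
\]
Integrating first in $(x,y)$ via the coarea count $\ud x\,\ud y\sim r^{Q-1}\ud r$ gives $\iint_{I^{2}}\rho(x,y)^{-d}\exp(-c\|z-w\|^{2}/\rho(x,y)^{2})\,\ud x\,\ud y\le C\,\mathrm{K}_{d-Q}(\|z-w\|)$, and hence $\mathbb{E}[J_\epsilon^{2}]\le C\,I_{d-Q}(\mu)$. Paley--Zygmund yields $\mathbb{P}\{J_\epsilon>0\}\ge c_\ast^{2}/(C\,I_{d-Q}(\mu))$; letting $\epsilon\downarrow 0$ with a tightness/weak-limit argument identifies the limit event with $\{X(I)\cap A\neq\emptyset\}$, and optimising over $\mu$ produces the capacity bound.

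The main obstacle I anticipate is the second-moment estimate, where the anisotropy has to be threaded cleanly through the Gaussian density of $(X(x),X(y))$. Condition (C2) is indispensable here: it is precisely the matching lower bound to (C1) on the conditional variance that pins the determinant $\det\Sigma_{x,y}$ to $\rho(x,y)^{2}$, making the coarea integral in $r$ converge exactly at the critical exponent $d-Q$. Without (C2) the $\rho^{-d}$ singularity fails to integrate against $\mu\otimes\mu$ and the sharp critical dimension cannot be identified, which is also why, in the applications later in the paper, establishing sharp two-sided metric and conditional-variance estimates for the discrete Green functions will be the decisive step.
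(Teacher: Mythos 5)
This theorem is imported by the paper from \cite[Theorem 2.1]{BLX09} and is stated without proof, so there is no internal argument to compare yours against. Your sketch reproduces the standard strategy of that reference (and of Dalang--Khoshnevisan--Nualart): a covering argument in the anisotropic metric $\rho$ for the Hausdorff-measure upper bound, and a second-moment/Paley--Zygmund argument with a smoothed occupation functional for the capacity lower bound, with $Q=\sum_{j}1/H_j$ entering correctly through the volume $\asymp\epsilon^{Q}$ of $\rho$-balls; the outline is sound. The one step you state too quickly is the joint-density bound $p_{(X(x),X(y))}(z,w)\le C\rho(x,y)^{-d}\exp\bigl(-c\|z-w\|^{2}/\rho(x,y)^{2}\bigr)$: the identity $\det\Sigma_{x,y}=\Var\bigl(X_0(y)\bigr)\Var\bigl(X_0(x)\mid X_0(y)\bigr)$ together with $\mathsf{(C0)}$ and $\mathsf{(C2)}$ only controls the prefactor $\rho(x,y)^{-d}$. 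To get the Gaussian factor in $\|z-w\|/\rho(x,y)$ one must factor the density as the density of $X_0(y)$ times the conditional density of $X_0(x)$ given $X_0(y)=w$, and verify that the conditional mean satisfies $\bigl|\E[X_0(x)\mid X_0(y)=w]-w\bigr|\le C\rho(x,y)(1+|w|)$ on the compact support of $\mu$; this uses the upper bound in $\mathsf{(C1)}$ and the two-sided bounds on the variances, all of which are available to you, but it is a genuine additional estimate rather than a consequence of the determinant alone. With that supplied, and with the routine tightness argument identifying $\lim_{\epsilon\downarrow0}\{J_\epsilon>0\}$ with $\{X(I)\cap A\neq\emptyset\}$ (up to enlarging $A$ slightly and using outer regularity), your plan matches the proof in the cited source.
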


  For different kinds of random fields, the most concerned issue is the value of $Q$.    If a random field $X$ satisfies \eqref{Cahu} for some integer $Q$, it is well known that $Q$ is the \textit{critical dimension} for hitting points, which means that points are polar for $X$ when $d> Q$, and are non-polar for $X$ when $d< Q$. We would like to mention that for Gaussian random fields, the upper bound in $\mathsf{(C1)} $ suffices to 
 derive the upper bound in \eqref{Cahu}, and the lower bounds in $\mathsf{(C1)}$ and $\mathsf{(C2)}$  are used to deduce the lower bound in \eqref{Cahu}.

\begin{rem}\label{rem1}
As stated in \cite[Section 2]{BLX09},  $\mathsf{(C1)}$ and $\mathsf{(C2)}$ are closely related.  For the case of $H\in(0,1)^m$, if the function $x\mapsto\E|X_0(x)|^2$ satisfies a certain smoothness condition, for instance, it has continuous first-order partial derivatives on $I$, then one can show that $\mathsf{(C1)}$ implies $\mathsf{(C2)}$  by using the following fact:  
 if $(Y,Z)$  is a centered Gaussian vector, then
\begin{align}\label{VarYZ}
\operatorname{Var}(Y| Z)
=\frac{\left(\rho_{Y,Z}^{2}-(\sigma_{Y}-\sigma_{Z})^{2}\right)\left((\sigma_{Y}+\sigma_{Z})^{2}-\rho_{Y,Z}^{2}\right)}{4 \sigma_{Z}^{2}},
\end{align}
 where $\rho_{Y,Z}^{2}=\mathbb{E}|Y-Z|^{2},\, \sigma_{Y}^{2}=\mathbb{E}[Y^{2}]$ and  $\sigma_{Z}^{2}=\mathbb{E}[Z^{2}]$.
 In fact, the above smoothness condition can also be weakened to be that
the function $x\mapsto\E|X_0(x)|^2$ is H\"older continuous with exponent $(H_1(1+\eta),\ldots,H_m(1+\eta))$ for some $\eta>0$.

%
%
\end{rem}

\begin{rem}\label{rm2.4}
For the work on hitting probabilities of $X$ with optimal H\"older continuity exponent $H=1$,
we are only aware of \cite{SV18}, where $X_0$ is the exact solution of the linear stochastic Poisson system on $[0,b],$ $b>0$, driven by additive white noises. 
  From the proofs of \cite[Theorem 5.10 and Theorem 5.11 ($k=1$)]{SV18}, one can see that
for the special case of $m=1$ and $H=1$, in order to obtain \eqref{Cahu},  it suffices to show that $X_0$ satisfies  $\mathsf{(C0)}$, $\mathsf{(C1)}$ and $\mathsf{(C2)}$$^\prime$:

$\mathsf{(C2)}$$^\prime$ there exist  positive constants  $c_{4},C_4$  such that for all  $x,\, y \in I,$ 
\begin{equation*}\label{C2'}
c_4\left|x-y\right|^2\le \operatorname{Var}\left(X_{0}(x) | X_{0}(y)\right) \le C_4\left|x-y\right|^2.
\end{equation*}
Here, $\mathsf{(C2)}$$^\prime$  corresponds to formula (77) in \cite{SV18}.

Noticing that by  Lemma \ref{remH=1}, $\mathsf{(C0)}$ and $\mathsf{(C1)}$ imply the right side of $\mathsf{(C2)}$$^\prime$, hence
 Theorem \ref{Gausshit} is also applicable to the case of $m=1$ and $H=1$. However,
for the case of $m=1$ and  $H=1$, the verification of $\mathsf{(C2)}$ is generally more involved since we cannot expect that  the function $x\mapsto\E|X_0(x)|^2$ is $(1+\eta)$-H\"older continuous with some $\eta>0$ (see Remark \ref{rem1}). 
\end{rem}

\begin{lem}\label{remH=1}
Let $m=1$, $H=1$. If $\mathsf{(C0)}$ and $\mathsf{(C1)}$ hold, then we have
\begin{equation*}
 \operatorname{Var}\left(X_{0}(x) | X_{0}(y)\right) \le C_4\left|x-y\right|^2,~\forall\,x,y\in I.
\end{equation*}
\end{lem}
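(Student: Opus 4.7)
The claim is a purely Gaussian, linear-algebraic statement, so my plan is to reduce it to a standard fact about best linear prediction and then invoke the upper half of $\mathsf{(C1)}$. With $Y=X_0(x)$ and $Z=X_0(y)$, the pair $(Y,Z)$ is centered Gaussian, and for such pairs the conditional variance satisfies
\begin{equation*}
\operatorname{Var}(Y\mid Z)\;=\;\min_{a\in\R}\E\bigl|Y-aZ\bigr|^{2},
\end{equation*}
the minimum being attained at $a=\operatorname{Cov}(Y,Z)/\sigma_{Z}^{2}$. This is the content of least-squares projection in the Gaussian case and requires no hypothesis other than that $(Y,Z)$ be jointly Gaussian and centered.

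The main step is then to specialize the variational bound to $a=1$. This gives immediately
\begin{equation*}
\operatorname{Var}\bigl(X_{0}(x)\mid X_{0}(y)\bigr)\;\le\;\E\bigl|X_{0}(x)-X_{0}(y)\bigr|^{2}\;\le\;c_{3}\,|x-y|^{2},
\end{equation*}
where the last inequality is exactly the upper bound in $\mathsf{(C1)}$ with $H=1$. Setting $C_{4}:=c_{3}$ finishes the proof. Notice that the hypothesis $\mathsf{(C0)}$ is in fact not used in this route; it is retained in the statement only to match the framework in which $\mathsf{(C2)}'$ is invoked later.

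As an alternative, one can argue directly from the identity \eqref{VarYZ} stated in Remark \ref{rem1}. Dropping the nonnegative term $(\sigma_{Y}-\sigma_{Z})^{2}$ and using $(\sigma_{Y}+\sigma_{Z})^{2}-\rho_{Y,Z}^{2}\le(\sigma_{Y}+\sigma_{Z})^{2}$ bounds the numerator by $\rho_{Y,Z}^{2}(\sigma_{Y}+\sigma_{Z})^{2}$; the upper half of $\mathsf{(C1)}$ together with the triangle inequality in $L^{2}$ and compactness of $I$ shows that $\sigma_{Y}+\sigma_{Z}$ is bounded on $I$; the lower bound $\mathsf{(C0)}$ yields $4\sigma_{Z}^{2}\ge 4c_{1}$ in the denominator; combining these gives the same conclusion. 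I expect no real obstacle here—this lemma is essentially a one-line consequence of the Gaussian projection formula, and both routes are routine.
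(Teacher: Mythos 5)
Your proposal is correct. Your primary argument---the least-squares characterization $\operatorname{Var}(Y\mid Z)=\min_{a\in\R}\E|Y-aZ|^{2}$ specialized to $a=1$, followed by the upper half of $\mathsf{(C1)}$---is a genuinely different and shorter route than the paper's. The paper instead works directly from the identity \eqref{VarYZ}: it shows that the factor $\rho_{Y,Z}^{2}-(\sigma_{Y}-\sigma_{Z})^{2}$ is at most $c_{3}|x-y|^{2}$ (nonnegativity of the subtracted square plus the upper bound in $\mathsf{(C1)}$), that the factor $(\sigma_{Y}+\sigma_{Z})^{2}-\rho_{Y,Z}^{2}$ is bounded by a constant, and that the denominator is bounded below via $\mathsf{(C0)}$---which is exactly your ``alternative'' route, so that part coincides with the paper's proof. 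What your main route buys is economy and slightly greater generality: it uses only joint Gaussianity and the upper estimate in $\mathsf{(C1)}$, dispenses with $\mathsf{(C0)}$ altogether (hence no degenerate-denominator issue), and sidesteps the need to justify that $\sigma_{Y}+\sigma_{Z}$ is bounded on $I$, a point the paper passes over quickly (it follows from $\mathsf{(C1)}$, the reverse triangle inequality in $L^{2}$, and compactness of $I$, rather than from $\mathsf{(C0)}$ as the paper's wording suggests). What the paper's route buys is uniformity: the same identity \eqref{VarYZ} is the tool invoked in Remark \ref{rem1} for the lower bound $\mathsf{(C2)}$, so the upper and lower conditional-variance estimates are read off the same formula.
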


\begin{proof}
Due to \eqref{VarYZ}, it suffices to show that there is $C>0$ such that for any $x,y\in I$,
\begin{align}\label{H=11}
\E|X_0(x)-X_0(y)|^2-\left(\sqrt{\E|X_0(x))^2}-\sqrt{\E|X_0(y)|^2}\right)^2\le C|x-y|^2
\end{align}
and
\begin{align}\label{H=12}
\left(\sqrt{\E|X_0(x))^2}+\sqrt{\E|X_0(y)|^2}\right)^2-\E|X_0(x)-X_0(y)|^2 \le C,
\end{align}
since H\"older's inequality implies that the left sides of  both \eqref{H=11} and \eqref{H=12} are always nonnegative. From $\mathsf{(C0)}$ and $\mathsf{(C1)}$, \eqref{H=11} and \eqref{H=12} follow immediately and the proof is completed .
\end{proof}

\section{Main results}\label{S3}
 The main results of this paper are the following Theorems \ref{finite-space-hit}, \ref{spectral-space-hit}, and \ref{EE-hit}, which can be summarized as follows:  the critical dimensions of both temporal and spatial semi-discretizations are half of those of the exact solution. As a consequence, for a large class of Borel sets $A$, the probability of the event that the paths of the numerical solution hit $A$ cannot converge to that of the exact solution.

 \subsection{Hitting probabilities of spatial and temporal discretizations}
 
 In this part, we state our main results on hitting probabilities of spatial and temporal discretizations, whose proofs are postponed to Section \ref{S4}. We first give the  hitting probabilities in time direction
 for the spatial semi-discretization of FDM.
\begin{tho}\label{finite-space-hit}
Let $N\gg1$ and $U^N(t,x)$ defined in \eqref{UNTX} be the numerical solution of FDM for  system \eqref{heat}. 
 Then there are $C_i=C_i(N,\epsilon,T_0,T,d),\,i=1,2$ such that for any Borel set $A$ in $\R^d$,
\begin{equation*}
C_1\operatorname{Cap}_{d-2}(A)  \le \mathbb{P}\left\{U^N([T_0,T]\times\{x\})\cap A \neq \emptyset\right\}  \le C_2\mathscr{H}_{d-2}(A),
\end{equation*}
where $x\in[\epsilon,1-\epsilon]\cap\{\frac{1}{N},\ldots,\frac{N-1}{N}\}$.
\end{tho}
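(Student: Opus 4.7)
The plan is to apply the Gaussian hitting criterion of Theorem \ref{Gausshit} with $m=1$ and $H_1 = 1/2$, so the resulting critical dimension $Q = 1/H_1 = 2$ matches the claim. Since the components of $U^N(\cdot,x)$ are i.i.d.\ centered Gaussian copies of the scalar process $X_0(t) := u^{1,N}(t,x)$ on $I := [T_0,T]$, it suffices to verify $\mathsf{(C0)}$, $\mathsf{(C1)}$, $\mathsf{(C2)}$ for $X_0$.

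The starting point is to diagonalize the noise at the grid point. Fix $x = x_i = i/N \in [\epsilon,1-\epsilon]$; then $\kappa_N(x_i) = x_i$, so by \eqref{sinN0} one has $e_k^N(x_i) = e_k(x_i)$. Combining \eqref{NGDF} with the discrete sine orthogonality
\[
\int_0^1 e_k(\kappa_N(z))\, e_l(\kappa_N(z))\, \ud z = \frac{1}{N}\sum_{j=1}^{N-1} e_k(x_j)\, e_l(x_j) = \delta_{kl}, \quad k,l \in \mathbb{Z}_N,
\]
one can rewrite \eqref{Heatw1} at $x = x_i$ as
\[
u^{1,N}(t, x_i) = \sum_{k=1}^{N-1} e_k(x_i)\, Z_k(t), \quad Z_k(t) := \int_0^t\!\!\int_0^1 e^{\lambda_k^N(t-r)} e_k(\kappa_N(z))\, W^1(\ud r\,\ud z),
\]
where $\{Z_k\}_{k=1}^{N-1}$ are independent Ornstein--Uhlenbeck processes driven by independent standard Brownian motions. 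Thus $X_0$ is reduced to a weighted finite sum of independent OU processes.

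Verification of $\mathsf{(C0)}$ and $\mathsf{(C1)}$ is then elementary: for $s,t \in I$, independence of the $Z_k$ gives
\[
\E|X_0(t) - X_0(s)|^2 = \sum_{k=1}^{N-1} e_k(x_i)^2\, \E|Z_k(t) - Z_k(s)|^2,
\]
and a direct OU computation yields two-sided bounds $c_k |t-s| \le \E|Z_k(t) - Z_k(s)|^2 \le C_k |t-s|$ uniformly over $s,t \in [0,T]$, with $c_k, C_k > 0$ depending on $\lambda_k^N$ and $T$. Retaining only the $k=1$ term for the lower bound (using $e_1(x_i)^2 \ge 2\sin^2(\pi\epsilon) > 0$ and that $-\lambda_1^N$ is bounded away from $0$ and $\infty$) and summing for the upper bound gives $\mathsf{(C1)}$ with $H_1 = 1/2$; setting $s = t$ in the same formulae gives $\mathsf{(C0)}$. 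Condition $\mathsf{(C2)}$ then follows from Remark \ref{rem1}, since $t \mapsto \E|X_0(t)|^2 = \sum_k e_k(x_i)^2 (1 - e^{2\lambda_k^N t})/(-2\lambda_k^N)$ is a finite sum of $C^\infty$ functions on $[T_0,T]$, more than meeting the smoothness hypothesis that allows $\mathsf{(C1)}$ to be upgraded to $\mathsf{(C2)}$. Invoking Theorem \ref{Gausshit} then yields the claimed Bessel--Riesz/Hausdorff bounds with $Q=2$. The only nontrivial bookkeeping point is producing the two-sided OU increment estimates uniformly over the $N-1$ modes; nothing deep is involved since $N$ is fixed and each $-\lambda_k^N$ lies in the bounded interval $[4k^2, k^2\pi^2]$, but one does need to separate the regime where $|t-s|$ is small relative to $-1/\lambda_k^N$ (where the linearization $1 - e^{-\alpha(t-s)} \approx \alpha(t-s)$ is valid) from the complementary regime, where the bound must be obtained by direct comparison on the compact parameter range.
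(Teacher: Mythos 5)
Your proposal is correct and follows essentially the same route as the paper: both arguments verify $\mathsf{(C0)}$--$\mathsf{(C2)}$ with $m=1$, $H_1=1/2$ and invoke Theorem \ref{Gausshit}, your diagonalization into independent Ornstein--Uhlenbeck modes via the discrete sine orthogonality being just a repackaging of the paper's mode-by-mode computation with the orthogonal family $\{e_k(\kappa_N(\cdot))\}$ (keeping only $k=1$ for the lower bound, with $e_1(x_i)^2\ge 2\sin^2(\pi\epsilon)$), and your use of Remark \ref{rem1} together with the smoothness of $t\mapsto\E|X_0(t)|^2$ for $\mathsf{(C2)}$ matching the paper exactly. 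One trivial slip: ``setting $s=t$'' yields $0$ rather than $\mathsf{(C0)}$ --- you mean $s=0$, or simply reading $\mathsf{(C0)}$ off the explicit variance formula you display, e.g.\ $\E|X_0(t)|^2\ge e_1(x_i)^2\bigl(1-e^{2\lambda_1^N T_0}\bigr)/(-2\lambda_1^N)>0$ for $t\in[T_0,T]$.
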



Theorem \ref{finite-space-hit} can be extended to the case of SGM. In addition, the numerical solution based on SGM is still a continuous Gaussian random field indexed by $(t,x)\in[0,T]\times[0,1]$, hence we further investigate its hitting probabilities in space direction.

 \begin{tho}\label{spectral-space-hit}
 Let $U^N(t,x)$  defined in \eqref{UNTX} be the numerical solution of SGM for  system \eqref{heat}. Then for sufficiently large $N$, there exist $C_i=C_i(N,\epsilon,T_0,T,d),\,i=1,2,3,4$ such that for
 all Borel set $A$ in $\R^d$,
\begin{gather}\label{exas3}
  C_1\operatorname{Cap}_{d-2}(A)  \le \uP\left\{U^N([T_0,T]\times\{x\})\cap A \neq \emptyset\right\}  \le C_2 \mathscr{H}_{d-2}(A),\\
  \label{exas1}
    C_3\operatorname{Cap}_{d-1}(A)  \le \uP\left\{U^N(\{t\}\times [\epsilon,1-\epsilon])\cap A \neq \emptyset\right\}  \le C_4 \mathscr{H}_{d-1}(A),
 \end{gather}
 where $x\in[\epsilon,1-\epsilon]$ and $t\in[T_0,T]$.
 \end{tho}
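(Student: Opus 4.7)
The plan is to apply Theorem \ref{Gausshit}, supplemented by Lemma \ref{remH=1} and Remark \ref{rm2.4} in the space direction where the optimal H\"older exponent turns out to be $H=1$. Since the components $u^{j,N}$ of $U^N$ are i.i.d.\ copies of the scalar field $v^N$ given by \eqref{Heatw1} with the SGM kernel \eqref{NGDG}, the task reduces to verifying conditions $\mathsf{(C0)}$--$\mathsf{(C2)}$ (or $\mathsf{(C2)'}$ when $H=1$) for $v^N$ on the relevant parameter set. Throughout I would work from the spectral expansion
\begin{equation*}
v^N(t,x) = \sum_{k=1}^{N-1} e_k(x)\,\eta_k(t), \qquad \eta_k(t) := \int_0^t e^{-k^2\pi^2(t-r)}\,\ud\beta_k(r),
\end{equation*}
in which the $\eta_k(t)$ are centered, independent and Gaussian with variance $\sigma_k^2(t) = (1-e^{-2k^2\pi^2 t})/(2k^2\pi^2)$.

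For \eqref{exas3} (time direction at fixed $x\in[\epsilon,1-\epsilon]$), orthogonality of the $\eta_k$'s gives
\begin{equation*}
\E|v^N(t,x)-v^N(s,x)|^2 = \sum_{k=1}^{N-1} e_k(x)^2\,\E|\eta_k(t)-\eta_k(s)|^2,
\end{equation*}
and since $k\le N-1$ is bounded, each scalar OU-type increment satisfies $\E|\eta_k(t)-\eta_k(s)|^2\asymp |t-s|$ uniformly on $[T_0,T]^2$; this yields both sides of $\mathsf{(C1)}$ with $H=1/2$, while $\mathsf{(C0)}$ follows from the same spectral identity (for $N$ large, some $e_k(x)\ne 0$). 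Because $t\mapsto \E|v^N(t,x)|^2$ is smooth, Remark \ref{rem1} upgrades $\mathsf{(C1)}$ to $\mathsf{(C2)}$ and Theorem \ref{Gausshit} delivers \eqref{exas3} with $Q=2$.

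For \eqref{exas1} (space direction at fixed $t\in[T_0,T]$), the finite trigonometric sum $v^N(t,\cdot)$ is smooth, so $H=1$ and Remark \ref{rm2.4} asks for $\mathsf{(C0)}$, $\mathsf{(C1)}$, and $\mathsf{(C2)'}$. The upper bound $\E|v^N(t,x)-v^N(t,y)|^2 = \sum_k(e_k(x)-e_k(y))^2\sigma_k^2(t)\le C|x-y|^2$ is immediate from $|e_k(x)-e_k(y)|\le \sqrt{2}\,k\pi|x-y|$. For the matching lower bound, I would set
\begin{equation*}
A(x,y) := \frac{\E|v^N(t,x)-v^N(t,y)|^2}{|x-y|^2}, \qquad A(x,x) := \sum_{k=1}^{N-1}(1-e^{-2k^2\pi^2 t})\cos^2(k\pi x),
\end{equation*}
which is continuous on $[\epsilon,1-\epsilon]^2$ (the diagonal limit comes out by the mean value theorem), and argue that $A$ is strictly positive there: off the diagonal, $A(x,y)=0$ would force $e_k(x)=e_k(y)$ for $k=1,\ldots,N-1$, impossible for $N$ large by linear independence; on the diagonal, $A(x,x)=0$ would force $\cos(k\pi x)=0$ for all such $k$, again impossible (already $k=1,2$ cannot share a zero). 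Compactness then gives the lower bound in $\mathsf{(C1)}$, and $\mathsf{(C0)}$ is handled by the same idea. Lemma \ref{remH=1} supplies the upper half of $\mathsf{(C2)'}$.

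The main obstacle is the lower half of $\mathsf{(C2)'}$, i.e.\ $\mathsf{(C2)}$ in the $H=1$ regime, since the smoothness argument of Remark \ref{rem1} is no longer available. Applying formula \eqref{VarYZ} with $Y=v^N(t,x)$, $Z=v^N(t,y)$, I would write
\begin{equation*}
\rho_{Y,Z}^2 - (\sigma_Y-\sigma_Z)^2 = \frac{2\bigl(\sigma_Y^2\sigma_Z^2 - (\E[YZ])^2\bigr)}{\sigma_Y\sigma_Z + \E[YZ]},
\end{equation*}
and apply Lagrange's identity to the Gaussian covariance structure to obtain
\begin{equation*}
\sigma_Y^2\sigma_Z^2 - (\E[YZ])^2 = \sum_{1\le j<k\le N-1}\sigma_j^2(t)\sigma_k^2(t)\,\bigl(e_j(x)e_k(y)-e_k(x)e_j(y)\bigr)^2.
\end{equation*}
A continuity-plus-compactness argument parallel to the one used for $\mathsf{(C1)}$ then shows that this sum divided by $|x-y|^2$ extends continuously across the diagonal (the diagonal value is $\sum_{j<k}\sigma_j^2(t)\sigma_k^2(t)[e_j(x)e_k'(x)-e_k(x)e_j'(x)]^2$, strictly positive on $[\epsilon,1-\epsilon]$ because already the $(j,k)=(1,2)$ term equals $16\pi^2\sigma_1^2(t)\sigma_2^2(t)\sin^6(\pi x)$) and remains bounded away from $0$ throughout $[\epsilon,1-\epsilon]^2$. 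Hence $\rho_{Y,Z}^2-(\sigma_Y-\sigma_Z)^2\ge c|x-y|^2$; combined with the uniform positive lower bound on $(\sigma_Y+\sigma_Z)^2-\rho_{Y,Z}^2$ coming from $\mathsf{(C0)}$ and the $\mathsf{(C1)}$ upper bound, this delivers $\mathsf{(C2)}$, and \eqref{exas1} follows from Theorem \ref{Gausshit} with $Q=1$.
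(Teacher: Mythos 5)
Your overall route is the same as the paper's: reduce to Theorem \ref{Gausshit} (with Remark \ref{rm2.4} for the $H=1$ space direction), prove $\mathsf{(C1)}$ with $H=1/2$ in time and $H=1$ in space, and obtain the conditional-variance bound from Lagrange's identity $\sigma_Y^2\sigma_Z^2-(\E[YZ])^2=\sum_{j<k}\sigma_j^2\sigma_k^2\bigl(e_j(x)e_k(y)-e_k(x)e_j(y)\bigr)^2$, which is exactly the paper's display \eqref{Var2-Cov} in Proposition \ref{spectralC2}. (Your two-factor decomposition of \eqref{VarYZ} is an algebraic detour: the product $\bigl(\rho^2-(\sigma_Y-\sigma_Z)^2\bigr)\bigl((\sigma_Y+\sigma_Z)^2-\rho^2\bigr)$ equals $4\bigl(\sigma_Y^2\sigma_Z^2-(\E[YZ])^2\bigr)$, which is the formula $\Var(Y|Z)=(\sigma_Y^2\sigma_Z^2-(\E[YZ])^2)/\sigma_Z^2$ the paper quotes directly.) Where you genuinely diverge is in how the spatial lower bounds are made quantitative: the paper uses explicit trigonometric estimates --- Lemma \ref{sincos} for $\mathsf{(C1)}$, and for $\mathsf{(C2)}$ it isolates the $(N-2,N-1)$ term of the Lagrange sum, treating $|x-y|\le\frac{1}{2N-3}$ by hand and the complementary regime by compactness --- whereas you anchor everything at the diagonal via the difference-quotient extension and the $(1,2)$ term $16\pi^2\sigma_1^2\sigma_2^2\sin^6(\pi x)$, then invoke continuity plus compactness. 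Your version is softer and works for all $N\ge 4$ without the threshold $N_0(\epsilon)$, at the price of non-explicit constants; both are valid.

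Two small points to tighten. First, the off-diagonal positivity you need for the $\mathsf{(C2)}'$ sum is that the vectors $(e_j(x))_{j\le N-1}$ and $(e_j(y))_{j\le N-1}$ are not \emph{proportional} for $x\ne y$, which is a stronger statement than the ``$e_k(x)=e_k(y)$ for all $k$ implies $x=y$'' used for $\mathsf{(C1)}$; the paper settles it via the ratios $\sin(k\pi x)/\sin(k\pi y)$ for $k=1,2,3$, and you should say this explicitly rather than ``parallel to''. Second, your claimed uniform lower bound on $(\sigma_Y+\sigma_Z)^2-\rho^2=2(\sigma_Y\sigma_Z+\E[YZ])$ does not follow from $\mathsf{(C0)}$ and the $\mathsf{(C1)}$ upper bound alone once $|x-y|$ is of order one (the bound $4c_1-C|x-y|^2$ can be negative); it again requires the non-proportionality/compactness argument, or is bypassed entirely by using the product identity above.
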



For the temporal semi-discretization based on EEM, we study its hitting probabilities in space direction.
\begin{tho}\label{EE-hit}
Let $U_M(t,x)$  defined in \eqref{UMTX} be the numerical solution of EEM for  system \eqref{heat}. 
Then there are $C_i=C_i(M,\epsilon,T_0,T,d),\,i=1,2$ such that for any Borel set $A$ in $\R^d$,
\begin{equation}\label{Cahutd}
C_1\operatorname{Cap}_{d-1}(A)  \le \mathbb{P}\left\{U_M(\{t\}\times[\epsilon,1-\epsilon])\cap A \neq \emptyset\right\}  \le C_2\mathscr{H}_{d-1}(A),
\end{equation}
where $t\in\{\frac{1}{M},\ldots,1\}\cap[T_0,T]$.
\end{tho}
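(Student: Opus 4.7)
The plan is to invoke Theorem \ref{Gausshit} in the form of Remark \ref{rm2.4} applied to the centered Gaussian field $x \mapsto U_M(t_j, x)$ on $I := [\epsilon, 1-\epsilon]$, with parameters $m = 1$ and $H = 1$ (so $Q = 1$). Because the components $u_M^k(t_j, \cdot)$ are independent copies of the scalar EEM solution $v_M(t_j, \cdot)$, this reduces to verifying $\mathsf{(C0)}$, $\mathsf{(C1)}$ and $\mathsf{(C2)'}$ for $X_0(x) := v_M(t_j, x)$. The key structural observation is that the integrand in \eqref{HeatM} involves $G_s$ only for $s \ge \delta t > 0$, so the EEM avoids the singularity of the Green function at $s = 0$; this is precisely what boosts the spatial H\"older exponent from $1/2$ (exact solution) to $1$.

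Using the Wiener isometry, the semigroup identity $\int_0^1 G_s(x, z) G_s(y, z) \ud z = G_{2s}(x, y)$, and the spectral expansion \eqref{GD0}, one obtains the closed-form covariance
\begin{equation*}
\E[v_M(t_j, x)\, v_M(t_j, y)] = \delta t \sum_{k=1}^{j} G_{2k\delta t}(x, y) = \delta t \sum_{m \ge 1} a_m\, e_m(x)\, e_m(y), \quad a_m := \sum_{k=1}^{j} e^{-2\pi^2 m^2 k \delta t}.
\end{equation*}
From this, $\mathsf{(C0)}$ is immediate: the $m = 1$ term alone yields a uniform lower bound on $\E|v_M(t_j, x)|^2$ for $x \in I$, while the elementary bound $a_m \le (2\pi^2 m^2 \delta t)^{-1}$ gives the matching upper bound. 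For the upper half of $\mathsf{(C1)}$ one uses $|e_m(x) - e_m(y)| \le \sqrt{2}\,\pi m |x - y|$ in $\E|v_M(t_j, x) - v_M(t_j, y)|^2 = \delta t \sum_m a_m (e_m(x) - e_m(y))^2$, and the exponential damping in $m$ makes the series summable. The matching lower bound is obtained by retaining only the $k=1$ slice and the modes $m \in \{1, 2\}$, then invoking the bi-Lipschitz character on $I$ of the smooth embedding $\Phi: x \mapsto (\sin \pi x, \sin 2\pi x)$, which follows from its injectivity on $I$ (case analysis) together with the nowhere-vanishing of $\|\Phi'(x)\|^2 = \pi^2\cos^2(\pi x) + 4\pi^2 \cos^2(2\pi x)$ (the two cosines cannot vanish simultaneously).

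The main obstacle is $\mathsf{(C2)'}$, since, as noted in Remark \ref{rm2.4}, the $(1+\eta)$-H\"older device of Remark \ref{rem1} is unavailable when $H = 1$. By \eqref{VarYZ} and the uniform bounds on $\sigma_x^2, \sigma_y^2$ established above, this reduces to proving $\sigma_x^2 \sigma_y^2 - \sigma_{xy}^2 \ge c |x - y|^2$ on $I^2$ (the upper half of $\mathsf{(C2)'}$ being Lemma \ref{remH=1}). Substituting the spectral formula into the Gram-determinant identity gives
\begin{equation*}
\sigma_x^2 \sigma_y^2 - \sigma_{xy}^2 = \frac{(\delta t)^2}{2} \sum_{m, n \ge 1} a_m a_n \bigl(e_m(x)\, e_n(y) - e_m(y)\, e_n(x)\bigr)^2 \ge (\delta t)^2 a_1 a_2 \bigl(e_1(x)\, e_2(y) - e_1(y)\, e_2(x)\bigr)^2,
\end{equation*}
and a product-to-sum computation yields the explicit identity
\begin{equation*}
e_1(x)\, e_2(y) - e_1(y)\, e_2(x) = 8\, \sin(\pi x)\, \sin(\pi y)\, \sin\tfrac{\pi(x+y)}{2}\, \sin\tfrac{\pi(x-y)}{2}.
\end{equation*}
On $I^2$, each of the three factors $\sin(\pi x)$, $\sin(\pi y)$, $\sin\tfrac{\pi(x+y)}{2}$ is bounded below by $\sin(\pi \epsilon) > 0$, while $|\sin\tfrac{\pi(x-y)}{2}| \ge |x - y|$ by the elementary inequality $|\sin u| \ge (2/\pi)|u|$ for $|u| \le \pi/2$. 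Dividing by the uniform upper bound on $\sigma_y^2$ completes the verification of $\mathsf{(C2)'}$, and Theorem \ref{Gausshit} (extended as in Remark \ref{rm2.4}) then delivers \eqref{Cahutd}.
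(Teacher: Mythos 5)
Your proposal is correct and follows the paper's overall strategy --- verify $\mathsf{(C0)}$, $\mathsf{(C1)}$ and $\mathsf{(C2)}^\prime$ for the scalar field $x\mapsto v_M(t_j,x)$ and invoke Theorem \ref{Gausshit} in the $m=1$, $H=1$ form of Remark \ref{rm2.4} --- but two of your verifications take genuinely different routes. For the upper half of $\mathsf{(C1)}$ the paper (Lemma \ref{Holder-h1Nx}) uses the decomposition $G=P+H$ from \eqref{GHP} together with the bound $\int_0^1|\partial_\xi P_s(\xi,z)|^2\,\ud z\le Cs^{-2}$ summed over time slices, whereas you work directly with the spectral covariance $\delta t\sum_m a_m e_m(x)e_m(y)$ and the Lipschitz bound on $e_m$; both give the same conclusion. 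The substantive improvement is in $\mathsf{(C2)}^\prime$: the paper repeats the proof of Proposition \ref{spectralC2}, which keeps the Lagrange term of two adjacent high modes, expands it via \eqref{sincos00}, and then needs a threshold on the mode index plus a two-case analysis in $|x-y|$ (the order-one case handled by a continuity/positivity argument). You instead keep the modes $m=1,2$ and use the exact factorization
\begin{equation*}
e_1(x)e_2(y)-e_1(y)e_2(x)=8\sin(\pi x)\sin(\pi y)\sin\tfrac{\pi(x+y)}{2}\sin\tfrac{\pi(x-y)}{2},
\end{equation*}
whose first three factors are bounded below by $\sin(\pi\epsilon)$ on $[\epsilon,1-\epsilon]^2$ and whose last satisfies $|\sin\tfrac{\pi(x-y)}{2}|\ge|x-y|$; this yields $\sigma_x^2\sigma_y^2-\sigma_{xy}^2\ge c|x-y|^2$ in one stroke, with no case analysis and no largeness condition. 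The remaining pieces ($\mathsf{(C0)}$ from the first mode, the lower half of $\mathsf{(C1)}$ from the bi-Lipschitz property of $x\mapsto(\sin\pi x,\sin 2\pi x)$, and the upper half of $\mathsf{(C2)}^\prime$ from Lemma \ref{remH=1}) match the paper; the only point to spell out more fully is your compactness argument for the bi-Lipschitz claim, namely the continuous extension of $(x,y)\mapsto\|\Phi(x)-\Phi(y)\|/|x-y|$ to the diagonal, which is sound but replaces the explicit case analysis of Lemma \ref{sincos}.
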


\newpage

Theorems \ref{finite-space-hit}, \ref{spectral-space-hit} and \ref{EE-hit} reveal that for some Borel sets $A$, the probability of the event that paths of the numerical solution hit $A$ cannot converge to that of the exact solution. More precisely,
by Frostman's theorem (\cite[Appendix C, Theorem 2.2.1]{KD02}), for any compact set $A\subset\R^d$,
\begin{align*}
\operatorname{dim}_\mathrm{H}(A)=&\sup \left\{s>0: \mathscr{H}_{s}(A)=\infty\right\}=\inf \left\{s>0: \mathscr{H}_{s}(A)=0\right\}\\
=&\sup \left\{s>0:  \operatorname{Cap}_s(A)>0\right\}=\inf \left\{s>0: \operatorname{Cap}_{s}(A)=0\right\},
\end{align*}
where $\operatorname{dim}_\mathrm{H}(A)$ is the Hausdorff dimension of $A$. Therefore, 
$$\operatorname{Cap}_{d-Q_\text{exact}}(A)>0, ~\forall\, d<\operatorname{dim}_\mathrm{H}(A)+Q_\text{exact},$$ and$$\mathscr{H}_{d-\frac{1}{2}Q_\text{exact}}(A)=0,~\forall\,d>\operatorname{dim}_\mathrm{H}(A)+\frac{1}{2}Q_\text{exact}.$$
Theorem \ref{heat-hit}  shows that for the exact solution $u$ of system \eqref{heat}, the critical dimension $Q^t_\text{exact}=4$ in time direction and  the critical dimension  $Q^x_\text{exact}=2$ in space direction. Theorems \ref{finite-space-hit}, \ref{spectral-space-hit} and \ref{EE-hit} indicate that for any $A\subset\R^d$ with $d\in\left(\operatorname{dim}_\mathrm{H}(A)+\frac{1}{2}Q^t_\text{exact},\,\operatorname{dim}_\mathrm{H}(A)+Q^t_\text{exact}\right)$, 
 $$\lim_{N\rightarrow \infty} \mathbb{P}\left\{U^N([T_0,T]\times\{x\})\cap A \neq \emptyset\right\}=0<   \mathbb{P}\left\{u([T_0,T]\times\{x\})\cap A \neq \emptyset\right\},$$
 and
for any $A\subset\R^d$ with $d\in\left(\operatorname{dim}_\mathrm{H}(A)+\frac{1}{2}Q^x_\text{exact},\,\operatorname{dim}_\mathrm{H}(A)+Q^x_\text{exact}\right)$, 
 $$\lim_{M\rightarrow \infty} \mathbb{P}\left\{U_M(\{t\}\times[\epsilon,1-\epsilon])\cap A \neq \emptyset\right\}=0<   \mathbb{P}\left\{u(\{t\}\times[\epsilon,1-\epsilon])\cap A \neq \emptyset\right\}.$$
For example, for $d=3$ and each $y\in\R^3$, $\dim_\textrm{H}(\{y\})=0$ (see e.g., \cite[Example 2.2]{AM18}), and hence $d\in(\operatorname{dim}_\mathrm{H}(\{y\})+\frac{1}{2}Q^t_\text{exact},\operatorname{dim}_\mathrm{H}(\{y\})+Q^t_\text{exact})$. This implies that
 for fixed $x\in[\epsilon,1-\epsilon]$, all points $y\in\R^3$ are nonpolar for $u(\cdot,x)$ but polar for the spatial semi-discretization $U^N(\cdot,x)$.

\subsection{Comparison with the finite dimensional situation}\label{SEC3.2}

Interpolation is usually used to extend the numerical solution from grid points to the whole interval. In view of \eqref{HeatM}, it is natural to define
the continuous exponential Euler numerical solution 
by 
\begin{equation}\label{Heatcon}
v_M(t,x)=\int_0^{t}\int_0^1G_{t-[\frac{r}{\delta t}]\delta t}(x,z)W(\ud z\ud r),
\end{equation}
where $[\cdot]$ denotes the greatest-integer function.
In the same way, we obtain the continuous EEM numerical solution $U_M(t,x)$ of system \eqref{heat}. 
We first study the  H\"older continuity of $v_M(t,x)$ in time, which is crucial to the analysis of hitting probabilities of $U_M(t,x)$.
\begin{lem}\label{Holder-h1Nt}
Let $v_M$ given by \eqref{HeatM} be the numerical solution  of EEM for \eqref{heatv0}. Then there exist positive constants $c_i=c_i(T_0,T,\epsilon),\,i=1,2$ such that for any $1\le j<i\le M$, 
\begin{align}\label{upphM}
c_1\sqrt{t_i-t_j}\le \E|v_M(t_i,x)-v_M(t_j,x)|^2\le c_2\sqrt{t_i-t_j},
\end{align}
where $x\in[\epsilon,1-\epsilon]$.
\end{lem}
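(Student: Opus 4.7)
The plan is to split the EEM approximation into two independent Gaussian pieces driven by the ``old'' and the ``fresh'' portions of the noise, reducing the variance to sums involving only the diagonal of the Green function $\Gamma(s):=G_s(x,x)$. Writing $\tau:=t_i-t_j$ and $\kappa_M(r):=[r/\delta t]\delta t$, I would decompose
\[
v_M(t_i,x)-v_M(t_j,x)=D_1+D_2,
\]
where $D_1$ integrates $G_{t_i-\kappa_M(r)}(x,\cdot)-G_{t_j-\kappa_M(r)}(x,\cdot)$ over $[0,t_j]\times[0,1]$ and $D_2$ integrates $G_{t_i-\kappa_M(r)}(x,\cdot)$ over $[t_j,t_i]\times[0,1]$ against $W$. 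These integrate over disjoint time strips, so $D_1$ and $D_2$ are independent centered Gaussians. Using It\^o's isometry and the semigroup identity $\int_0^1 G_a(x,z)G_b(x,z)\,\ud z=G_{a+b}(x,x)$, a direct computation yields
\[
\E D_2^2=\delta t\sum_{\ell=1}^{i-j}\Gamma(2\ell\delta t),\quad \E D_1^2=\delta t\sum_{\ell=1}^{j}\bigl[\Gamma(2\ell\delta t)+\Gamma(2\ell\delta t+2\tau)-2\Gamma(2\ell\delta t+\tau)\bigr],
\]
so that $\E D_1^2$ is a Riemann sum of a second-order finite difference of $\Gamma$, nonnegative by convexity of $\Gamma$ (manifest from \eqref{GD0}).

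The analytic input I would use is the sharp estimate $\Gamma(s)\asymp s^{-1/2}$ on $(0,T]$, uniformly for $x\in[\epsilon,1-\epsilon]$. This follows from \eqref{GHP}: the singular piece $P_s(x,x)=(4\pi s)^{-1/2}$ is dominant, while $H_s(x,x)$ (and its $s$-derivatives) are bounded on this range because $|x\pm 2n|\ge 2\epsilon$ keeps every image term exponentially small in $1/s$. Given this, the lower bound in \eqref{upphM} follows at once by dropping the nonnegative $\E D_1^2$:
\[
\E|v_M(t_i,x)-v_M(t_j,x)|^2\ge \E D_2^2\ge c\,\delta t\sum_{\ell=1}^{i-j}(\ell\delta t)^{-1/2}=c\sqrt{\delta t}\sum_{\ell=1}^{i-j}\ell^{-1/2}\ge c_1\sqrt{\tau},
\]
and identically $\E D_2^2\le C\sqrt{\tau}$ via the matching upper bound on $\Gamma$ together with $\sum_{\ell=1}^N\ell^{-1/2}\le 2\sqrt{N}$.

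The main obstacle is the upper bound on $\E D_1^2$, because the naive estimate $F(\ell\delta t):=\Gamma(2\ell\delta t)+\Gamma(2\ell\delta t+2\tau)-2\Gamma(2\ell\delta t+\tau)\le 2\Gamma(2\ell\delta t)$ only yields $\sqrt{t_j}$, which may vastly exceed $\sqrt{\tau}$. To exploit the second-difference cancellation I would use the identity
\[
F(\ell\delta t)=\int_0^\tau\!\!\int_0^\tau \Gamma''(2\ell\delta t+u+v)\,\ud u\,\ud v,
\]
combined with the complementary bound $\Gamma''(s)\le Cs^{-5/2}$ on $(0,T]$, again arising from \eqref{GHP} because $\partial_s^2 P_s(x,x)=\tfrac{3}{4\sqrt{4\pi}}\,s^{-5/2}$ is the leading singularity. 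This delivers the two-sided control $F(\ell\delta t)\le C\min\{(\ell\delta t)^{-1/2},\,\tau^2(\ell\delta t)^{-5/2}\}$; splitting $\delta t\sum_{\ell=1}^{j}F(\ell\delta t)$ at $\ell^\star\sim\tau/\delta t$, the low-$\ell$ part is bounded by $\sqrt{\delta t}\sum_{\ell\le\ell^\star}\ell^{-1/2}\lesssim\sqrt{\tau}$ and the high-$\ell$ part by $\tau^2\delta t^{-3/2}\sum_{\ell>\ell^\star}\ell^{-5/2}\lesssim \tau^2\delta t^{-3/2}(\ell^\star)^{-3/2}\lesssim\sqrt{\tau}$. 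Together with $\E D_2^2\lesssim\sqrt{\tau}$, this completes the upper bound in \eqref{upphM}.
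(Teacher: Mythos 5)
Your proposal is correct, and its skeleton coincides with the paper's proof: the same split of $\E|v_M(t_i,x)-v_M(t_j,x)|^2$ into the ``old noise'' term over $[0,t_j]$ and the ``fresh noise'' term over $[t_j,t_i]$, the same reduction to the diagonal $\Gamma(s)=G_s(x,x)$ via the semigroup identity, the same use of $G\asymp P$ to get $\Gamma(s)\asymp s^{-1/2}$, and the lower bound obtained by discarding the nonnegative first term. The only genuine divergence is in the delicate step, the upper bound for $\E D_1^2=\delta t\sum_{\ell}\bigl[\Gamma(2\ell\delta t)+\Gamma(2\ell\delta t+2\tau)-2\Gamma(2\ell\delta t+\tau)\bigr]$: the paper first dominates the second difference by the first difference $\Gamma(2(t_j-t_k))-\Gamma(t_i+t_j-2t_k)$ using monotonicity of $t\mapsto G_t(x,x)$, and then bounds the resulting sum by a telescoping integral comparison yielding $2\sqrt{t_i-t_j}$; you instead keep the full second difference, write it as $\int_0^\tau\int_0^\tau\Gamma''(2\ell\delta t+u+v)\,\ud u\,\ud v$, invoke $\Gamma''(s)\le Cs^{-5/2}$ (which does follow from \eqref{GHP} since $H$ is smooth on $[0,T]\times[\epsilon,1-\epsilon]^2$), and split the sum at $\ell^\star\sim\tau/\delta t$. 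Both routes are valid and give the same constant order; yours is slightly more systematic (the $\min\{(\ell\delta t)^{-1/2},\tau^2(\ell\delta t)^{-5/2}\}$ bound quantifies the cancellation explicitly and would generalize to other kernels with a known second-derivative decay), while the paper's monotonicity trick avoids differentiating $\Gamma$ twice. Two small points to tidy up: the arguments of $\Gamma$ and $\Gamma''$ reach $2t_i\le 2T$, so state your bounds on $(0,2T]$ rather than $(0,T]$; and note that \eqref{PGP} as stated covers only $t\in[T_0,T]$, so the two-sided bound $\Gamma(s)\asymp s^{-1/2}$ for all small $s$ should be justified directly from the image-sum representation (as your parenthetical remark already indicates).
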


 The following corollary indicates that $\frac{1}{4}$ is the upper bound of the H\"older exponent of $v_M(\cdot,x)$ but is not the optimal H\"older exponent. 

\begin{cor}\label{cor1}
Let the condition of Lemma \ref{Holder-h1Nt} hold and fix $x\in[\epsilon,1-\epsilon]$. Then there exists some positive constant $c_3=c_3(T_0,T)$ such that for any $T_0\le s< t\le T$, 
\begin{align}\label{vmst0}
\E|v_M(t,x)-v_M(s,x)|^2\le c_3\sqrt{t-s}.
\end{align}
However, there is no $c_4>0$ such that for any $T_0\le s< t\le T$,
\begin{align}\label{vmst1}
\E|v_M(t,x)-v_M(s,x)|^2\ge c_4\sqrt{t-s}.
\end{align}
\end{cor}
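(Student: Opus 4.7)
The corollary contains two claims with opposite flavor: the upper bound \eqref{vmst0} valid for all $T_0\le s<t\le T$, and the failure of the matching lower bound \eqref{vmst1}. I would attack both via It\^o's isometry applied to the representation \eqref{Heatcon}, which rewrites $\E|v_M(t,x)-v_M(s,x)|^2$ as a deterministic $L^2$-norm of differences of $G_{\cdot-\kappa_M(\cdot)}(x,\cdot)$, where $\kappa_M(r):=[r/\delta t]\delta t$. Two ingredients are used throughout: the Chapman--Kolmogorov identity $\int_0^1 G_a(x,z)G_b(x,z)\,\ud z=G_{a+b}(x,x)$, and, for the fixed $x\in[\epsilon,1-\epsilon]$, the asymptotics $g(u):=G_u(x,x)\le C u^{-1/2}$ and $|g''(u)|\le C u^{-5/2}$ for small $u$, both of which follow from \eqref{GHP}--\eqref{PGP} and the smoothness of the correction $H_t$.

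The upper bound \eqref{vmst0} is first proved for $s,t\in[t_k,t_{k+1})$. Splitting the Wiener integral over the disjoint pieces $[0,t_k]$, $[t_k,s]$ and $[s,t]$ gives, by orthogonality of white-noise increments, $\E|v_M(t,x)-v_M(s,x)|^2=I_1+I_2+I_3$, where $I_3=(t-s)g(2(t-t_k))$, $I_2=(s-t_k)[g(2(t-t_k))-2g(t+s-2t_k)+g(2(s-t_k))]$, and $I_1=\sum_{j=0}^{k-1}\delta t\,[g(2(t-t_j))-2g(t+s-2t_j)+g(2(s-t_j))]$. The bound $I_3\le C\sqrt{t-s}$ is immediate from $g(2(t-t_k))\le C(t-t_k)^{-1/2}$ and $t-t_k\ge t-s$. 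The brackets defining $I_1$ and $I_2$ are second differences of $g$ of step $t-s$; Taylor's theorem expresses each as $(t-s)^2 g''(\xi)$ with $\xi$ bounded below by twice the smaller time argument. Inserting $|g''(\xi)|\le C(s-t_j)^{-5/2}$, the convergence of $\sum_\ell\ell^{-5/2}$ and $t-s\le\delta t$ yield $I_1\le C(t-s)^2(\delta t)^{-3/2}\le C\sqrt{t-s}$. For $I_2$ the same Taylor estimate gives $I_2\le C(t-s)^2(s-t_k)^{-3/2}$, which is $\le C\sqrt{t-s}$ when $s-t_k\ge t-s$; in the complementary regime $s-t_k<t-s$ the crude bound $|G_a-G_b|^2\le 2(G_a^2+G_b^2)$ instead delivers $I_2\le C(s-t_k)^{1/2}\le C(t-s)^{1/2}$. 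When $s,t$ lie in different subintervals, a three-term triangle inequality through the flanking grid points $t_k$ (the nearest grid point $\le s$) and $t_\ell$ (the nearest grid point $\le t$) reduces the problem to the same-subinterval bound just proved and Lemma~\ref{Holder-h1Nt}.

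To disprove \eqref{vmst1}, I would exhibit a sequence along which the ratio $\E|v_M(t,x)-v_M(s,x)|^2/\sqrt{t-s}$ vanishes. Fix a grid point $t_k\in[T_0,T)$, and for $h\in(0,\delta t/2)$ set $s_h:=t_k+\delta t/2$ and $t_h:=s_h+h$, so that both points lie in the same subinterval and $s_h-t_k$ is uniformly bounded away from $0$. In the decomposition above, $g$ and $g''$ are now bounded on a compact subset of $(0,\infty)$, whence $I_3=O(h)$ from $g(2(t_h-t_k))\le C$, while the two second-difference terms give $I_1,I_2=O(h^2)$. Thus $\E|v_M(t_h,x)-v_M(s_h,x)|^2=O(h)$ and the ratio equals $O(\sqrt{h})\to 0$ as $h\to 0^+$, contradicting the existence of any $c_4>0$ as in \eqref{vmst1}.

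The main obstacle is the sharp treatment of $I_2$ (and, analogously, $I_1$): the naive pointwise bound $|G_a-G_b|^2\le 2(G_a^2+G_b^2)$ alone produces $O((s-t_k)^{1/2})$, which is uncontrolled by $\sqrt{t-s}$ in the regime $s-t_k\gg t-s$, forcing the two-regime argument. This same second-difference cancellation inside a subinterval is precisely what produces the smoother $O(h)$ behavior at interior pairs and thereby renders any uniform lower bound impossible.
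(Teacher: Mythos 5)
Your within-subinterval analysis and your disproof of \eqref{vmst1} are correct, and they take a genuinely different route from the paper. You reduce everything to the one-variable function $g(u)=G_u(x,x)$ via $\int_0^1G_a(x,z)G_b(x,z)\,\ud z=G_{a+b}(x,x)$ and control the brackets in $I_1,I_2$ as second differences $g(a+2h)-2g(a+h)+g(a)=h^2g''(\xi)$ with $|g''(\xi)|\le C\xi^{-5/2}$; the paper instead compares $G$ with the explicit heat kernel $P$ through \eqref{GHP}--\eqref{PGP}, bounds the resulting Riemann sums by integrals of $r^{-1/2}$, and kills the lower bound \eqref{vmst1} by an L'Hospital computation on finitely many terms. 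Your version packages the same cancellation into $g''$ and gives a quantitative $O(h)$ (hence ratio $O(\sqrt h)$) for interior pairs, which is arguably cleaner; both are valid.

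The gap is in your cross-subinterval reduction. You route the triangle inequality through $t_k$, the nearest grid point \emph{below} $s$, and $t_\ell$, the nearest grid point below $t$. Take $s=t_{k+1}-\eta$ and $t=t_{k+1}+\eta$ with $\eta\ll\delta t$: then $t_\ell=t_{k+1}$, and the middle term $\E|v_M(t_{k+1},x)-v_M(t_k,x)|^2$ is bounded \emph{below} by $c_1\sqrt{\delta t}$ (Lemma \ref{Holder-h1Nt}), while the third term $\E|v_M(t_k,x)-v_M(s,x)|^2$ is only controlled by $C\sqrt{s-t_k}\approx C\sqrt{\delta t}$. Neither is $O(\sqrt{t-s})=O(\sqrt{\eta})$, so as written your reduction only delivers a bound of order $\sqrt{\delta t}$, not $c_3\sqrt{t-s}$ with a uniform constant. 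The fix is to route through the grid points lying \emph{between} $s$ and $t$: use $t_{k+1}$ (the smallest grid point $\ge s$) and $t_\ell$ (the largest grid point $\le t$), so that all three increments $t_{k+1}-s$, $t_\ell-t_{k+1}$, $t-t_\ell$ are $\le t-s$; the two outer pieces fall under your same-subinterval estimate (which extends to the closed right endpoint $t_{k+1}$, since for $r\in[s,t_{k+1})$ the integrand involves only $G_{\delta t}$ and no singularity appears), and the middle piece is Lemma \ref{Holder-h1Nt}. This is precisely the choice made in the paper's proof ($t_j$ with $s\in[t_{j-1},t_j)$ and $t_i$ with $t\in[t_i,t_{i+1})$). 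With that correction your argument is complete.
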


By  \eqref{vmst0}, we have that for any $x\in[\epsilon,1-\epsilon]$,
\begin{equation*}
\mathbb{P}\left\{U_M([T_0,T]\times\{x\})\cap A \neq \emptyset\right\}  \le C_2\mathscr{H}_{d-2}(A).
\end{equation*}
However, \eqref{vmst1} prevents us from deriving the lower bound of hitting probabilities in time direction of $U_M(t, x)$ in terms of Bessel-Riesz capacity. For infinite dimensional  stochastic differential equation, the continuous temporal semi-discretization numerical solution is smoother in every subinterval $(t_i,t_{i+1})$ than the exact solution. However, for the finite dimensional stochastic differential equation, the result is different.

Let $\{B(t)=(B^0(t),B^1(t),\ldots,B^d(t)),t\ge0\}$ be a standard $(d+1)$-dimensional Brownian motion on $(\Omega,\mathscr F,\{\mathscr F_t\}_{t\ge 0},\mathbb P)$, and $Y(t)=(Y^1(t),\ldots,Y^d(t))$ be the solution of  the following system
\begin{equation}\label{Yt}
\ud Y^i(t)=-\lambda Y^i(t)\ud t+\ud B^i(t),~t\in(0,T],~i=1,\ldots,d,
\end{equation}
where $\lambda>0$ and $Y^i(0)=0$. Obviously, each component $Y^i(t)$ is  an independent copy of  the 1-dimensional Ornstein--Ulenbeck process $\{Y^0(t),t\ge0\}$ which satisfies
 \begin{equation}\label{Yt0}
 \ud Y^0(t)=-\lambda Y^0(t)\ud t+\ud B^0(t),~ t\in(0,T]; ~ Y^0(0)=0.
 \end{equation} 
 Obviously, we have 
  \begin{equation}\label{Yt00}
  Y^0(t)=\int_0^te^{-\lambda(t-r)}\ud B^0(r), ~t\ge0,
 \end{equation} 
from which, one obtains that for $s<t$,
  \begin{align*}
  \E|Y^0(t)-Y^0(s)|^2
  =&\int_s^te^{-2\lambda(t-r)}\ud r+\int_0^s|e^{-\lambda(t-r)}-e^{-\lambda(s-r)}|^2\ud r.
 \end{align*}
 
It is clear that $  \E|Y^0(t)-Y^0(s)|^2\le C(T,\lambda)|t-s|$ and 
$$\E|Y^0(t)-Y^0(s)|^2\ge\int_s^te^{-2\lambda(t-r)}\ud r\ge e^{-2\lambda T}|t-s|$$ 
 for all $0<s<t\le T$.
Besides, $\E|Y^0_t|^2=\frac{1-e^{-2\lambda t}}{2\lambda}\ge \frac{1-e^{-2\lambda t_0}}{2\lambda}$ for all $t\in[T_0,T]$, and $\E|Y^0_\cdot|^2$ is a Lipschitz continuous function on $[0,T]$. By Theorem \ref{Gausshit} and Remark \ref{rem1},  we deduce that for every Borel set $A$ in $\R^d$,
\begin{equation*}
C_{1} \operatorname{Cap}_{d-2}(A)  \le \mathbb{P}\left\{Y([T_0,T])\cap A \neq \emptyset\right\}  \le C_{2} \mathscr{H}_{d-2}(A)
\end{equation*}
with $C_1,C_2$ being positive constants depending on $T_0,T,d,\lambda$.

  When we apply EEM to discretize \eqref{Yt0} and use the same continuous approach as in \eqref{Heatcon},  the associated numerical solution is
 \begin{equation*}
\bar Y^0(t)=\int_0^{t}e^{-\lambda(t-[\frac{r}{\delta t}]\delta t)}\ud B^0(r),~t\in(0,T],
\end{equation*}
which has a similar formulation as in \eqref{Yt00}. Analogous estimates yield that
\begin{equation*}
C_1 \operatorname{Cap}_{d-2}(A)  \le \mathbb{P}\left\{\bar Y([T_0,T])\cap A \neq \emptyset\right\}  \le C_2 \mathscr{H}_{d-2}(A)
\end{equation*}
for every Borel set $A$ in $\R^d$, where $\bar Y$ is the continuous exponential Euler approximation of $Y$. It can be concluded that the continuous exponential Euler numerical solution $\bar Y=\{\bar Y(t),t\in[T_0,T]\}$ for system
\eqref{Yt} preserves the critical dimension of  the exact solution $Y=\{Y(t),t\in[T_0,T]\}$, which is different from the infinite dimensional case. In fact, this property not only holds for the continuous exponential Euler numerical solution, but also holds for the  Euler--Maruyama method under a proper continuity approach.

The Euler--Maruyama method applied to \eqref{Yt0} yields
$$Y_{i}^0=Y_{i-1}^0-\lambda\delta t Y_{i-1}^0+\triangle B_{i-1}^0,~ i\in\mathbb{Z}_{M+1},$$
where $\triangle B_i^0=B^0(t_{i+1})-B^0(t_i)$. After rearranging, we have
$$Y_{i}^0=\sum_{k=0}^{i-1}(1-\lambda\delta t)^{i-1-k}\triangle B_k^0=\int_0^{t_i}(1-\lambda\delta t)^{[\frac{t_i-r}{\delta t}]}\ud B^0(r),~ i\in\mathbb{Z}_{M+1}.$$
Naturally, we define the continuous Euler--Maruyama numerical  solution for \eqref{Yt0} by
\begin{equation}\label{YOt}
\widetilde Y^0(t)=\int_0^{t}(1-\lambda\delta t)^{[\frac{t-r}{\delta t}]}\ud B^0(r),~ i\in\mathbb{Z}_{M+1}.
\end{equation}
\begin{prop}\label{OUn}
Fix $\delta t\in(0,\frac{1}{\lambda})$ and 
let $\widetilde Y(t)=(\widetilde Y^1(t),\ldots,\widetilde Y^d(t))$ be the  continuous Euler--Maruyama numerical solution for system \eqref{Yt}. Then there exist positive constants $C_i=C_i(T_0,T,d,\lambda),$ $i=1,2$ such that for every Borel set $A$ in $\R^d$,
\begin{equation*}
C_1 \operatorname{Cap}_{d-2}(A)  \le \mathbb{P}\left\{\widetilde Y([T_0,T])\cap A \neq \emptyset\right\}  \le C_2 \mathscr{H}_{d-2}(A).
\end{equation*}
\end{prop}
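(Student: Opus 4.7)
The plan is to apply Theorem \ref{Gausshit} with $m=1$ and $H_1=1/2$, which yields critical dimension $Q=1/H_1=2$. Since the coordinates of $\widetilde Y$ are independent copies of the scalar Gaussian process $\widetilde Y^0$ defined in \eqref{YOt}, it suffices to verify $\mathsf{(C0)}$, $\mathsf{(C1)}$, and $\mathsf{(C2)}$ for $\widetilde Y^0$ on $I=[T_0,T]$.

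Set $\alpha:=1-\lambda\delta t\in(0,1)$. By It\^o's isometry and the substitution $u=t-r$,
\begin{align*}
\E|\widetilde Y^0(t)|^2 &= \int_0^t \alpha^{2[u/\delta t]}\,\ud u,\\
\E|\widetilde Y^0(t)-\widetilde Y^0(s)|^2 &= \int_s^t \alpha^{2[(t-r)/\delta t]}\,\ud r + \int_0^s \bigl(\alpha^{[(t-r)/\delta t]}-\alpha^{[(s-r)/\delta t]}\bigr)^2\,\ud r.
\end{align*}
From the first identity the integrand equals $1$ on $u\in[0,\min(\delta t,t))$, so $\E|\widetilde Y^0(t)|^2\ge\min(\delta t,T_0)>0$ on $[T_0,T]$, establishing $\mathsf{(C0)}$. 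The lower bound in $\mathsf{(C1)}$ follows by discarding the nonnegative cross term and using $\alpha^{2[(t-r)/\delta t]}\ge\alpha^{2(T-T_0)/\delta t}$ for $r\in[s,t]$, which gives $\E|\widetilde Y^0(t)-\widetilde Y^0(s)|^2\ge\alpha^{2(T-T_0)/\delta t}(t-s)$.

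For the upper bound in $\mathsf{(C1)}$ the first integral is trivially bounded by $t-s$. For the cross term I would substitute $u=s-r$ and partition $[0,s]$ into the intervals $[j\delta t,(j+1)\delta t)$. On each such interval the exponent difference $[(u+t-s)/\delta t]-[u/\delta t]$ takes one of two consecutive nonnegative integer values, both bounded by $\lceil(t-s)/\delta t\rceil$; invoking the elementary inequality $|\alpha^a-\alpha^b|\le|a-b|(1-\alpha)\alpha^{\min(a,b)}$ and summing the resulting geometric series $\sum_{j\ge 0}\alpha^{2j}$ then produces a bound of the form $C(\delta t,T,\lambda)(t-s)$, after separately treating $t-s<\delta t$ (where the jump set in each cell has length $t-s$) and $t-s\ge\delta t$ (where the raw bound is $O((t-s)^2/\delta t)$, absorbed via $t-s\le T-T_0$).

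Finally, $\mathsf{(C2)}$ follows from Remark \ref{rem1}: the function $t\mapsto\E|\widetilde Y^0(t)|^2=\int_0^t\alpha^{2[u/\delta t]}\,\ud u$ has a.e.\ derivative pointwise bounded by $1$, hence is Lipschitz, i.e., H\"older continuous of exponent $1=H_1(1+\eta)$ with $\eta=1>0$. Combined with $\mathsf{(C0)}$ and $\mathsf{(C1)}$, Remark \ref{rem1} yields $\mathsf{(C2)}$, after which Theorem \ref{Gausshit} delivers the stated hitting-probability bounds with $Q=2$. I expect the main obstacle to be the precise grid-point bookkeeping in the cross-term estimate of $\mathsf{(C1)}$: the piecewise-constant exponent $[\cdot/\delta t]$ prevents any direct use of smoothness, and one must split the analysis by whether $t-s$ falls below or above one step $\delta t$; every other step is a routine application of the already-established machinery.
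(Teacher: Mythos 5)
Your proposal is correct and follows essentially the same route as the paper: It\^o isometry to split $\E|\widetilde Y^0(t)-\widetilde Y^0(s)|^2$ into a diagonal term (two-sided bounded by $c(t-s)$) and a cross term, a case split on $t-s\ge\delta t$ versus $t-s<\delta t$ for the cross term, and Remark \ref{rem1} combined with the Lipschitz continuity of $t\mapsto\E|\widetilde Y^0(t)|^2$ to get $\mathsf{(C2)}$ before invoking Theorem \ref{Gausshit} with $Q=2$. The only (cosmetic) differences are that you bound the cross term via $|\alpha^a-\alpha^b|\le|a-b|(1-\alpha)\alpha^{\min(a,b)}$ and a geometric series where the paper uses $|\alpha^a-\alpha^b|^2\le\alpha^{2b}-\alpha^{2a}$, and your derivation of the Lipschitz property of the variance from $\E|\widetilde Y^0(t)|^2=\int_0^t\alpha^{2[u/\delta t]}\,\ud u$ is slightly more direct than the paper's reuse of the cross-term estimate.
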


In general, the hitting probabilities of continuous versions of numerical solutions depend on continuous approaches.
If consider the linear interpolation of the Euler--Maruyama numerical solution $\{Y_i^0,\,i\in\mathbb{Z}_{M+1}\}$,
 $$\widetilde Y^0(t)=\frac{t_{i}-t}{\delta t}\widetilde Y^0_{i-1}+\frac{t-t_{i-1}}{\delta t}\widetilde Y^0_{i},~ t\in(t_{i-1},t_{i}),~ i\in\mathbb{Z}_{M+1},$$
then we cannot obtain that there is $c>0$ such that
 \begin{align*}
  \E|\widetilde Y^0(t)-\widetilde Y^0(s)|^2\ge c(t-s),~\,\forall \,T\ge t>s\ge T_0.
 \end{align*}
 Actually, for any $t_m<s<t<t_{m+1}$, 
 \begin{align*}
  \E|\widetilde Y^0(t)-\widetilde Y^0(s)|^2=\frac{(t-s)^2}{(\delta t)^2}  \E|\widetilde Y^0(t_{i+1})-\widetilde Y^0(t_i)|^2\le C(\delta t,T,\lambda)\frac{(t-s)^2}{\delta t}.
  \end{align*}
Therefore, the linear interpolation is not a proper choice to inherit the critical dimension of the exact solution.

\section{Proofs}\label{S4}

In this section, we present the proofs of main results in Section \ref{S3}.
\subsection{Proof of Theorem \ref{finite-space-hit}}

Based on Theorem \ref{Gausshit}, we only need to prove that the  numerical solution $v^N$ of the spatial discretization of FDM satisfies $\mathsf{(C0)}$-$\mathsf{(C2)}$.
The following lemma is prepared for deriving the optimal H\"older continuity of $v^N(t,x)$ with respect to $x\in[\epsilon,1-\epsilon]$. 
\begin{lem}\label{sincos}
Let $0<\epsilon\ll1$ and $N>8$. Then for any $x,y\in[\epsilon,1-\epsilon]$,
\begin{align}\label{sin}
&|e_1(x)-e_1(y)|^2+|e_2(x)-e_2(y)|^2\ge c(\epsilon)|x-y|^2,\\\label{sinN}
&|e^N_1(x)-e^N_1(y)|^2+|e^N_2(x)-e^N_2(y)|^2\ge c(\epsilon,N)|x-y|^2,
\end{align}
where $c(\epsilon)$ and $c(\epsilon,N)$ are positive constants.
\end{lem}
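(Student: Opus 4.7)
\textbf{Plan for \eqref{sin}.} I would define
$$\phi(x,y):=\frac{|e_1(x)-e_1(y)|^2+|e_2(x)-e_2(y)|^2}{(x-y)^2},\quad x\ne y,$$
and extend $\phi$ continuously to the diagonal by setting
$$\phi(x,x):=|e_1'(x)|^2+|e_2'(x)|^2=2\pi^2\cos^2(\pi x)+8\pi^2\cos^2(2\pi x),$$
using the identity $e_k(x)-e_k(y)=(x-y)\int_0^1 e_k'(y+s(x-y))\,\ud s$ and dominated convergence. I would then check $\phi>0$ on all of $[\epsilon,1-\epsilon]^2$. Positivity on the diagonal is immediate: $\cos(\pi x)$ vanishes on $[\epsilon,1-\epsilon]$ only at $x=1/2$, where $\cos^2(2\pi x)=1$. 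Off the diagonal, $\phi(x,y)=0$ would require both $\sin(\pi x)=\sin(\pi y)$ and $\sin(2\pi x)=\sin(2\pi y)$; the first forces $y\in\{x,1-x\}$, and plugging $y=1-x$ into the second yields $\sin(2\pi x)=-\sin(2\pi x)$, hence $x\in\{0,1/2,1\}$, which inside $[\epsilon,1-\epsilon]$ forces $x=1/2$ and thus $y=1/2=x$, a contradiction. Compactness of $[\epsilon,1-\epsilon]^2$ and continuity of $\phi$ then yield the positive minimum $c(\epsilon)$.

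\textbf{Plan for \eqref{sinN}.} The same scheme applies, but now $e_k^N$ is only piecewise affine with breakpoints at $\{i/N\}$, so extra care is needed. I would represent the secant slope $\bar{d}_k(x,y):=(e_k^N(x)-e_k^N(y))/(x-y)$ as the mean of the piecewise-constant derivative $(e_k^N)'$ on $[\min(x,y),\max(x,y)]$; then $\phi_N:=\bar d_1^2+\bar d_2^2$ is continuous off the diagonal. Injectivity of $x\mapsto(e_1^N(x),e_2^N(x))$ on $[\epsilon,1-\epsilon]$ follows from: (i) the symmetries $e_1^N(1-x)=e_1^N(x)$ and $e_2^N(1-x)=-e_2^N(x)$ (inherited from $e_k$ because the grid is symmetric under $x\mapsto 1-x$) combined with strict monotonicity of $e_1^N$ on each half-interval, the middle grid cell---only present when $N$ is odd, where $e_1^N$ is constant---being handled separately using that $e_2^N$ is strictly monotone between $\pm\sqrt{2}\sin(\pi/N)$ there; and (ii) the fact that $e_2^N$ vanishes in $(0,1)$ only at $x=1/2$. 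On the diagonal, the limit of $\phi_N$ as $(x,y)\to(x_0,x_0)$ reduces to analysis of the cell-slope vectors $(d_{1,i},d_{2,i})$ with $d_{k,i}=2\sqrt{2}N\cos(k\pi(2i+1)/(2N))\sin(k\pi/(2N))$ obtained via product-to-sum identities; each such vector is nonzero, and for $N>8$ the approximation $d_{k,i}\approx e_k'((2i+1)/(2N))$ is accurate enough that consecutive vectors $(d_{\cdot,i-1})$ and $(d_{\cdot,i})$ are not anti-parallel.

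\textbf{Main obstacle.} The delicate step is the grid-point analysis in \eqref{sinN}: at $x_0=i^\ast/N$ the one-sided derivatives of $e_k^N$ differ, so $\phi_N$ is multivalued on the diagonal, and one must verify that every convex combination $\lambda(d_{1,i^\ast-1},d_{2,i^\ast-1})+(1-\lambda)(d_{1,i^\ast},d_{2,i^\ast})$ with $\lambda\in[0,1]$ is nonzero. This non-anti-parallelism of consecutive slope vectors is where the hypothesis $N>8$ is used, since it ensures both discrete vectors lie in a small neighbourhood of $(e_1'(x_0),e_2'(x_0))$, which is bounded away from zero on $[\epsilon,1-\epsilon]$ (the only near-degeneracy, at $x_0=1/2$, being harmless because $e_2'(1/2)\ne 0$).
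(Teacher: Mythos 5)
Your strategy is sound and genuinely different from the paper's. For \eqref{sin} the paper argues by explicit product-to-sum identities, splitting into the cases $x+y\notin[1-\epsilon,1+\epsilon]$ (where $e_1$ alone gives the bound, via $|\sin(\pi x)-\sin(\pi y)|=2|\cos(\tfrac{\pi(x+y)}{2})\sin(\tfrac{\pi(x-y)}{2})|$) and $x+y\in[1-\epsilon,1+\epsilon]$ (where $e_2$ takes over); for \eqref{sinN} it runs a five-case analysis over the subintervals $[\epsilon,\tfrac12-\tfrac1N]$, $(\tfrac12-\tfrac1N,\tfrac12+\tfrac1N)$, $[\tfrac12+\tfrac1N,1-\epsilon]$, exploiting monotonicity of $e_1^N$ and $e_2^N$ on these pieces. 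Your compactness argument trades the paper's explicit constants for a much shorter verification: injectivity of $x\mapsto(e_1(x),e_2(x))$ handles the off-diagonal, and nonvanishing of the derivative data handles the diagonal. Your injectivity arguments (for both the continuous and the interpolated map, including the odd-$N$ middle cell) are correct, and your identification of the grid-point diagonal as the delicate spot is exactly right: the limit set of $\phi_N$ at $(x_0,x_0)$ with $x_0=i^\ast/N$ consists of the squared norms of convex combinations $\lambda\mathbf d_{i^\ast-1}+(1-\lambda)\mathbf d_{i^\ast}$, so anti-parallelism must be excluded.

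The one step I would not accept as written is the claim that for $N>8$ the approximation $d_{k,i}\approx e_k'((2i+1)/(2N))$ is ``accurate enough''. The perturbation in the second component alone is of order $2\sqrt2\,\pi\cdot 2\pi|\theta_i-x_0|\le 2\sqrt2\,\pi^2/N\approx 3.1$ for $N=9$, which is comparable to $\inf_x\bigl(|e_1'(x)|^2+|e_2'(x)|^2\bigr)^{1/2}=\pi\sqrt{31/32}\approx 3.1$, so the ``small neighbourhood'' argument does not close at the stated threshold without sharper numerics. The fix is cheap and avoids numerics entirely: a combination $\lambda\mathbf d_{i-1}+(1-\lambda)\mathbf d_{i}$ with $\lambda\in(0,1)$ can vanish only if, for each $k$, $d_{k,i-1}$ and $d_{k,i}$ have strictly opposite signs or both vanish. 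Writing $d_{k,i}=2\sqrt2\,N\sin\bigl(\tfrac{k\pi}{2N}\bigr)\cos(k\pi\theta_i)$ with $\theta_i=\tfrac{2i+1}{2N}$, both-vanishing is impossible since $\theta_{i-1}\neq\theta_i$, a sign change of the $k=1$ component forces $\tfrac12\in(\theta_{i-1},\theta_i)$, and a sign change of the $k=2$ component forces $\tfrac14$ or $\tfrac34\in(\theta_{i-1},\theta_i)$; an interval of length $1/N<1/4$ cannot contain both, and the endpoint values $\lambda\in\{0,1\}$ give the individual vectors $\mathbf d_{i-1},\mathbf d_i$, which are nonzero because $\cos(\pi\theta)$ and $\cos(2\pi\theta)$ have no common zero. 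With that replacement your proof is complete.
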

\begin{proof}
Let $x,y\in[\epsilon,1-\epsilon]$. The proof of \eqref{sin} is divided into two cases.

\textit{Case 1:} $x+y\in [2\epsilon,1-\epsilon]\cup[1+\epsilon,2-2\epsilon]$. In this case, $2\epsilon-1\le x-y\le 1-2\epsilon$. Therefore,
\begin{align*}
|\sin(\pi x)-\sin(\pi y)|^2
=4\cos^2\left(\frac{\pi (x+y)}{2}\right)\sin^2\left(\frac{\pi (x-y)}{2}\right)\ge4\sin^2\left(\frac{\pi \epsilon}{2}\right)|x-y|^2,
\end{align*}
because $\frac{2}{\pi}\le\frac{\sin\theta}{\theta}\le 1$ holds for all $\theta\in(0,\frac{\pi}{2}].$

\textit{Case 2:} $x+y\in [1-\epsilon,1+\epsilon]$. In this case, we have $|\cos\left(\pi (x+y)\right)|\ge \cos(\pi\epsilon)$ and $-(1-\epsilon)\le x-y\le 1-\epsilon$. This implies that
$|\sin\left(\pi (x-y)\right)|\ge\frac{\sin(\pi(1-\epsilon))}{\pi(1-\epsilon)}|x-y|,$
and hence
\begin{align*}
|\sin(2\pi x)-\sin(2\pi y)|^2
=4\cos^2\left(\pi (x+y)\right)\sin^2\left(\pi (x-y)\right)\ge 4\cos^2(\pi\epsilon)\frac{\sin^2(\pi(1-\epsilon))}{\pi^2(1-\epsilon)^2}|x-y|^2.
\end{align*}
Combining 
\textit{Case 1} and \textit{Case 2}, the proof of \eqref{sin} is completed. 
We now turn to the proof of \eqref{sinN}. 
Recall that $e_1^N$ defined in \eqref{sinN0} is the linear interpolation of points $\{e_1(j/N),\,j\in\mathbb{Z}_{N+1}\cup\{0\}\}$.  Without loss of generality, assume that $x<y$. %
We split the interval $[\epsilon,1-\epsilon]$ into 
$$\left[\epsilon,\frac{1}{2}-\frac{1}{N}\right]\cup\left(\frac{1}{2}-\frac{1}{N},\frac{1}{2}+\frac{1}{N}\right)\cup\left[\frac{1}{2}+\frac{1}{N},1-\epsilon\right]=:A_1\cup A_2\cup A_3.$$
For $N\ge 8$, it holds that $A_2\subset [\frac{1}{4}+\frac{1}{N},\frac{3}{4}-\frac{1}{N}]$.
If $x,y\in[x_{l-1},x_{l}]$ for some $l\in\mathbb{Z}_{N+1}$, then 
$$|e_i^N(x)-e_i^N(y)|=N|x-y|\left|e_i\left(\frac{l-1}{N}\right)-e_i\left(\frac{l}{N}\right)\right|,~i=1,2.$$
It follows from \eqref{sin} that
$$|e_1^N(x)-e_1^N(y)|+|e_2^N(x)-e_2^N(y)|\ge c(\epsilon)|x-y|.$$
Hence, we only need to prove the case of $x\in[x_l,x_{l+1}]$ and $y\in(x_{m},x_{m+1}]$ for some $l<m$.

(a) $x,y\in A_1$. 
By the mean value theorem, we have $$|e_1(x)-e_1(y)|\ge \sqrt{2}\cos\left(\frac{\pi}{2}-\frac{\pi}{N}\right)|x-y|\ge\sqrt{2}\pi\sin\left(\frac{\pi}{N}\right)|x-y|,$$
which together with the fact that $e_1(x)=\sqrt{2}\sin(\pi x)$ is strictly increasing  in $A_1$ yields that $$|e_1^N(x)-e_1^N(y)|\ge\sqrt{2}\pi\sin\left(\frac{\pi}{N}\right)|x-y|.$$
If $x\in[x_l,x_{l+1}]\subset A_1$ and $y\in(x_{m},x_{m+1}]\subset A_1$  for some $l<m$, then
\begin{align*}
|e_1^N(x)-e_1^N(y)|&= |e_1^N(x)-e_1^N(x_{l+1})|+|e_1^N(x_{l+1})-e_1^N(x_m)|+|e_1^N(x_{m})-e_1^N(y)|\\
&\ge \sqrt{2}\pi\sin\left(\frac{\pi}{N}\right)(x_{l+1}-x)+\sqrt{2}\pi\sin\left(\frac{\pi}{N}\right)(x_m-x_{l+1})+\sqrt{2}\pi\sin\left(\frac{\pi}{N}\right)(y-x_m)\\
&=\sqrt{2}\pi\sin\left(\frac{\pi}{N}\right)|x-y|.
\end{align*}

(b) $x,y\in [\frac{1}{4}+\frac{1}{N},\frac{3}{4}-\frac{1}{N}]$ or $x,y\in A_3$. Notice that $e_1(x)=\sqrt{2}\sin(\pi x)$ is strictly decreasing  in $A_3$ with derivative $e_1^\prime(x)=\sqrt{2}\pi\cos(\pi x)\in(-\sqrt{2}\pi\cos(\pi\epsilon),-\sqrt{2}\pi\sin\left(\frac{\pi}{N}\right))$,
and $e_2(x)=\sqrt{2}\sin(2\pi x)$  is strictly decreasing  in $[\frac{1}{4}+\frac{1}{N},\frac{3}{4}-\frac{1}{N}]$ with derivative $e_2^\prime(x)=2\sqrt{2}\pi\cos(2\pi x)\in(-2\sqrt{2}\pi,-2\sqrt{2}\pi\sin\left(\frac{2\pi}{N}\right))$. Hence, \eqref{sinN} follows from an  argument similar to (a).

(c) $x\in A_1$, $y\in A_2$. If $x\in[\frac{1}{4}+\frac{1}{N},\frac{3}{4}-\frac{1}{N}]\cap A_1$, then  $x,y\in[\frac{1}{4}+\frac{1}{N},\frac{3}{4}-\frac{1}{N}]$, and 
\eqref{sinN} holds  by virtue of (b). If $x\in[\epsilon,\frac{1}{4}+\frac{1}{N}]$, then 
\begin{align*}
|e_1^N(x)-e_1^N(y)|&\ge\sin\left(\pi\left(\frac{1}{2}-\frac{2}{N}\right)\right)-\sin\left(\frac{\pi}{4}\right)\\
&=2\cos\left(\left(\frac{3}{8}-\frac{1}{N}\right)\pi\right)\sin\left(\left(\frac{1}{8}-\frac{1}{N}\right)\pi\right)\\
&\ge 2\cos\left(\frac{\pi}{4}\right)\sin\left(\left(\frac{1}{8}-\frac{1}{N}\right)\pi\right)|x-y|,
\end{align*}
since $|x-y|\le 1$ and $N>8$.

(d) $x\in A_2$, $y\in A_3$. In this case, the proof is similar to (c).

(e) $x\in A_1$, $y\in A_3$. For $x\in A_1$, $e_2^N(x)\ge \min\left\{\sin(2\pi\epsilon),\sin(\frac{2\pi}{N})\right\}=:c_0>0$, and $e_2^N(y)\le -c_0$. Hence,
\begin{align*}
|e_2^N(x)-e_2^N(y)|\ge2c_0\ge2c_0|x-y|.
\end{align*}

The proof is finished.
\end{proof}
Recall that  the numerical solution $v^N$ of FDM or SGM for  \eqref{heatv0} is formulated by  \eqref{Heatw1}.  
Based on Lemma \ref{sincos}, we proceed to obtain the optimal H\"older continuity exponent of $(t,x)\mapsto v^N(t,x)$. 

\begin{prop}\label{Holder-h0N}
Let $N\gg1$.
 Then there exist positive constants $C_i=C_i(N,\epsilon,T_0,T),$ $i=1,2$ such that 
 for any $(t,x),\,(s,y)\in[T_0,T]\times[\epsilon,1-\epsilon]$,
\begin{align}\label{upph}
C_1(|t-s|+|x-y|^2)\le\E|v^N(t,x)-v^N(s,y)|^2\le C_2(|t-s|+|x-y|^2).
\end{align}

\end{prop}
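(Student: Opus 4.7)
My plan is to compute $\E|v^N(t,x)-v^N(s,y)|^2$ exactly via \eqref{Heatw1}. Assuming without loss of generality $t\ge s$, split the stochastic integral at $r=s$ and write $v^N(t,x)-v^N(s,y)=A_1+A_2$, where $A_1:=\int_0^s\int_0^1[G^N_{t-r}(x,z)-G^N_{s-r}(y,z)]W(\ud r\ud z)$ and $A_2:=\int_s^t\int_0^1 G^N_{t-r}(x,z)W(\ud r\ud z)$ are independent by the disjointness of their time-integration domains. Using the discrete orthogonality $\int_0^1 e_k(\kappa_N(z))e_l(\kappa_N(z))\ud z=\delta_{kl}$ (for $k,l\in\{1,\dots,N-1\}$; the analogue $\int_0^1 e_k e_l\ud z=\delta_{kl}$ covers SGM), It\^o's isometry yields the key identity
\begin{equation*}
\E|v^N(t,x)-v^N(s,y)|^2 = I_1+I_2,
\end{equation*}
with
\begin{equation*}
I_1:=\sum_{k=1}^{N-1}\bigl(e_k^N(x)e^{\lambda_k^N(t-s)}-e_k^N(y)\bigr)^2 a_k(s),\quad I_2:=\sum_{k=1}^{N-1}(e_k^N(x))^2 b_k(t-s),
\end{equation*}
where $a_k(s):=(1-e^{2\lambda_k^N s})/(-2\lambda_k^N)$ and $b_k(\tau):=(1-e^{2\lambda_k^N\tau})/(-2\lambda_k^N)$.

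The upper bound is the easy half. Using $|e_k^N|\le\sqrt{2}$, $a_k\le T$, $b_k(\tau)\le\tau$, $(1-e^{\lambda_k^N\tau})^2\le|\lambda_k^N|^2\tau^2\le C_N\tau$ on $[0,T]$, together with the Lipschitz estimate $|e_k^N(x)-e_k^N(y)|^2\le C_N|x-y|^2$ (inherited from the Lipschitz constant of $e_k$ on $[0,1]$), I decompose $e_k^N(x)e^{\lambda_k^N(t-s)}-e_k^N(y)=[e_k^N(x)-e_k^N(y)]+e_k^N(x)[e^{\lambda_k^N(t-s)}-1]$, square, and sum the finitely many modes to get $I_1+I_2\le C_2(|t-s|+|x-y|^2)$.

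The lower bound is where the real difficulty lies, because the inner expression in $I_1$ couples the space and time differences through the factor $e^{\lambda_k^N(t-s)}$. Keeping only $k=1$, $I_2\ge(e_1^N(x))^2 b_1(t-s)\ge c_1(t-s)$, since $(e_1^N)^2$ stays bounded below on $[\epsilon,1-\epsilon]$ for $N$ large and $b_1(\tau)\ge c'\tau$ on $[0,T]$ by concavity of $1-e^{-a\tau}$. For $I_1$, restricting to $k=1,2$ and applying $(a-b)^2\ge\tfrac12 a^2-b^2$ with $a=e_k^N(x)-e_k^N(y)$ and $b=(1-e^{\lambda_k^N(t-s)})e_k^N(x)$, then invoking Lemma \ref{sincos}, I get $I_1\ge c_3|x-y|^2-C_3(t-s)^2$. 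The parasitic term is killed by a case split: fix $\tau_0\in(0,T]$ with $C_3\tau_0\le c_1/2$; for $t-s\le\tau_0$, $C_3(t-s)^2\le(c_1/2)(t-s)\le I_2/2$, so $I_1+I_2\ge c_3|x-y|^2+(c_1/2)(t-s)$; for $t-s>\tau_0$, $I_2\ge c_1\tau_0$ already dominates the uniformly bounded quantity $|t-s|+|x-y|^2$. In either regime $I_1+I_2\ge C_1(|t-s|+|x-y|^2)$.

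I expect the main obstacle to be precisely the cross term $(1-e^{\lambda_k^N(t-s)})e_k^N(x)$ inside $I_1$, which blocks a clean separation of pure-time and pure-space contributions; its quadratic-in-$(t-s)$ smallness pitted against the linear lower bound on $I_2$ is what makes the case split close the argument.
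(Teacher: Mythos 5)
Your proof is correct, and its computational core --- It\^o's isometry combined with the (discrete) orthogonality of the $\psi_k^N$, the first-mode lower bound $I_2\ge(\varphi_1^N(x))^2b_1(t-s)\ge c_1(t-s)$ for the time increment, and Lemma \ref{sincos} applied to the modes $k=1,2$ for the space increment --- is the same as the paper's. Where you genuinely diverge is in how the lower bound is closed in the mixed case $t\neq s$, $x\neq y$. The paper's Step 2 only establishes the two pure-case lower bounds $\E|v^N(t,x)-v^N(t,y)|^2\ge c|x-y|^2$ and $\E|v^N(t,x)-v^N(s,x)|^2\ge c|t-s|$, and its Step 3 then glues them together by comparing $|t-s|$ against $|x-y|^2$ in three regimes, deferring the details to \cite[Lemma 5.1]{HLS16}. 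You instead retain the exact two-term identity $I_1+I_2$ coming from the independence of the increments over $[0,s]$ and $[s,t]$, absorb the time--space coupling inside $I_1$ via $(a-b)^2\ge\tfrac{1}{2}a^2-b^2$ so that $I_1\ge c_3|x-y|^2-C_3(t-s)^2$, and then kill the quadratic defect against the linear lower bound on $I_2$ by a single threshold split in $t-s$ (using that $|t-s|+|x-y|^2$ is uniformly bounded in the complementary regime). This makes the mixed case entirely self-contained, at the price of tracking the cross term $(1-e^{\lambda_k^N(t-s)})\varphi_k^N(x)$ explicitly; the paper's route is shorter on the page but outsources to an external lemma exactly the step you carry out by hand. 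Your upper bound argument matches what the paper calls ``standard'' in its Step 1, and the constants in both arguments have the same dependence on $N,\epsilon,T_0,T$, so the two proofs are interchangeable.
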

\begin{proof}
The proof is separated into three steps.

\textit{Step 1:}
In view of \eqref{NGDG} and \eqref{NGDF}, one can check that the discrete heat kernel associated with SGM or FDM satisfies the following two facts:

$(i)$ the sequence $\{\lambda_k^N\}_{k\in\mathbb{Z}_N}\subset(-\infty,0]$ is strictly decreasing with respect to $k$;

$(ii)$ for every $N\ge 1$ and $k\in\mathbb{Z}_N$, functions
$ \varphi_k^N,\,\psi_k^N: [0,1]\rightarrow \R$ are uniformly bounded from below and above by $-\sqrt 2$ and $\sqrt 2$, respectively. Moreover, 
$$\left|\varphi_k^N(x)-\varphi_k^N(y)\right|\le\sqrt{2} \pi k|x-y|,~\forall\,x,y\in[0,1].$$
Then the proof of the right side of \eqref{upph} is standard by using the above facts $(i)$ and $(ii)$.

\textit{Step 2:} In this step, we prove the left side of \eqref{upph} for $t=s$ or $x=y$. Without loss of generality, assume that $t>s$.
Notice that for $m\neq n$, 
\begin{equation*}\label{orth}
\int_0^{1}\psi_m^N(y)\psi_n^N(y)\ud y=0,
\end{equation*}
which leads to
\begin{align}\label{N1sp}\notag
\E|v^N(t,x)-v^N(t,y)|^2=&\int_0^t\int_0^{1}|G^N_{t-r}(x,z)-G^N_{t-r}(y,z)|^2\ud r\ud z\\
=&\sum_{k=1}^{N-1}\int_0^te^{2\lambda_k^N(t-r)}\ud r|\varphi_k^N(x)-\varphi_k^N(y)|^2
\end{align}
and
\begin{align}\label{N1ti}\notag
\E|v^N(t,x)-v^N(s,x)|^2=&\int_0^s\int_0^{1}|G^N_{t-r}(x,z)-G^N_{s-r}(x,z)|^2\ud r\ud z+\int_s^t\int_0^{1}|G^N_{t-r}(x,z)|^2\ud r\ud z\\
\ge&\sum_{k=1}^{N-1}\int_s^te^{2\lambda_k^N(t-r)}\ud r|\varphi_k^N(x)|^2.
\end{align}
Noticing that for any
 $x\in[\epsilon,1-\epsilon]$, we have $\sin(\pi x)\ge\min\{\sin(\pi \epsilon),\sin(\pi (1-\epsilon))\}=\sin(\pi \epsilon)>0$, hence $\varphi^N_1(x)\ge\sin(\pi \epsilon)$. This yields that
  \eqref{N1ti} is bounded from below as
$$ \E|v^N(t,x)-v^N(s,x)|^2\ge\int_s^te^{2\lambda^N_1(t-r)}\ud r |\varphi^N_1(x)|^2\ge C(\epsilon,T)(t-s),$$
which proves the lower bound in \eqref{upph} for the case $t\neq s$ and $x=y$.

To prove the case $t=s\ge T_0$ and $x\neq y$, it is sufficient to notice that 
$$|\varphi_1^N(x)-\varphi_1^N(y)|^2+|\varphi_2^N(x)-\varphi_2^N(y)|^2\ge c(\epsilon,N)|x-y|^2,$$
by virtue of Lemma \ref{sincos}.
Substituting it into \eqref{N1sp} gives 
\begin{align*}
\E|v^N(t,x)-v^N(t,y)|^2&\ge \sum_{k=1}^2\int_0^te^{2\lambda_k^N(t-r)}\ud r|\varphi_k^N(x)-\varphi_k^N(y)|^2\\
&\ge \frac{e^{2\lambda_2^N t}-1}{2\lambda_2^N}\sum_{k=1}^2|\varphi_k^N(x)-\varphi_k^N(y)|^2\ge c(\epsilon,T_0,N)|x-y|^2.
\end{align*}

\textit{Step 3:}
In \textit{Step 2}, we have shown that there exist $c_i,\,i=1,2,3,4$ such that for any $\,t\in[T_0,T]$,
$$c_1|x-y|^2\le \E|v^N(t,x)-v^N(t,y)|^2\le c_2|x-y|^2,~\forall\,x,y\in[\epsilon,1-\epsilon],$$
and that for any $x\in[\epsilon,1-\epsilon]$,
 $$c_3|t-s|\le \E|v^N(t,x)-v^N(s,x)|^2\le c_4|t-s|,~\forall\,t,s\in[T_0,T].$$
 In order to extend the lower bound in \eqref{upph} to the case of $t\neq s$ and $x\neq y$, it suffices to discuss the following three situations:
 
  $(1)~ |x-y|^2\ge\frac{4c_4}{c_1}|t-s|$;
  
$(2)~|t-s|\ge\frac{4c_2}{c_3}|x-y|^2$;

$(3)~\frac{c_1}{4c_4}|x-y|^2\le |t-s|\le \frac{4c_2}{c_3}|x-y|^2$.\\
Readers are referred to \cite[Lemma 5.1]{HLS16} for detailed discussions for the above three situations. The proof is finished.

\end{proof}

\textit{Proof of Theorem \ref{finite-space-hit}:} 
By Proposition \ref{Holder-h0N}, the optimal H\"older continuity exponent of the considered numerical solution $v^N$  is  $\frac{1}{2}$ in time direction.
For any fixed $x\in[\epsilon,1-\epsilon]$,
 $$\Var \,v^N(t,x)= \sum_{k=1}^{N-1}\int_0^te^{2\lambda_{k}^N(t-r)}\ud r|\varphi^N_k(x)|^2,~\forall\,t\in[T_0,T].$$
Thus, $\Var\, v^N(\cdot,x)$ is a Lipschitz function with respect to $t\in[T_0,T]$.  
Besides, we notice that
\begin{align}\notag\label{C0} 
\Var \,v^N(t,x)&\ge
 \sum_{k=1}^{N-1}\int_0^te^{2\lambda_{k}^N(t-r)}\ud r|\varphi^N_k(x)|^2\\
 &\ge \frac{e^{2\lambda_{1}^NT_0}-1}{\lambda_{1}^N}\sin^2(\pi\epsilon),~\forall\,(t,x)\in[T_0,T]\times[\epsilon,1-\epsilon].
\end{align}
By Remark \ref{rem1} and Proposition \ref{Holder-h0N},  for any $x\in[\epsilon,1-\epsilon]$, \begin{equation}\label{C2spa}
\Var\left(v^N(t,x)|v^N(s,x)\right)\ge c\sqrt{t-s},~\forall\,t,s\in[T_0,T].
\end{equation} 
Based on Proposition \ref{Holder-h0N}, \eqref{C0} and \eqref{C2spa}, we finish the proof of Theorem \ref{finite-space-hit}. 
\qed

\subsection{Proof of Theorem \ref{spectral-space-hit}}

 In this part, let $v^N(t,x)$ be the numerical solution of SGM for \eqref{heatv0}. 
 By \cite[Proposition 3.13]{EML07}, we have
$$\Var \left(v^N(t,x)|v^N(s,y)\right)=\frac{ \Var\, v^N(t,x)\Var\, v^N(s,y)-\Cov( v^N(t,x), v^N(s,y))^2}{ \Var\, v^N(s,y)}.$$
 Based on \eqref{C0}, we proceed to 
 derive the lower bound of $\Var \left(v^N(t,x)|v^N(s,y)\right)$. 

\begin{prop}\label{spectralC2}
Let $v^N(t,x)$ be the numerical solution of SGM for \eqref{heatv0}. Then
for any $0<\epsilon\ll 1$,
there exists $N_0:=N_0(\epsilon)$ such that for all $N\ge N_0$, 
\begin{align*}
 \Var\, v^N(t,x)\Var\, v^N(t,y)-\Cov( v^N(t,x), v^N(t,y))^2\ge c(\epsilon,N,T_0)|x-y|^2
\end{align*}
holds for any $x,y\in[\epsilon,1-\epsilon]$ and $t\in[T_0,T]$.

\end{prop}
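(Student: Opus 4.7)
My plan is to exploit the Gram--determinant identity (Lagrange's identity) to rewrite the left-hand side as a manifestly nonnegative sum of squares, and then extract one well-chosen summand. Using
\[
v^N(t,x) = \int_0^t \int_0^1 G^N_{t-r}(x,z)\, W(\ud r\, \ud z)
\]
with $G^N_t(x,z) = \sum_{k=1}^{N-1} e^{-k^2\pi^2 t} e_k(x) e_k(z)$, the orthonormality of $\{e_k\}$ in $L^2(0,1)$ gives
\[
\Var v^N(t,x) = \sum_{k=1}^{N-1} a_k(t)\, e_k(x)^2, \qquad \Cov\bigl(v^N(t,x), v^N(t,y)\bigr) = \sum_{k=1}^{N-1} a_k(t)\, e_k(x) e_k(y),
\]
where $a_k(t) := (1 - e^{-2k^2\pi^2 t})/(2k^2\pi^2) > 0$. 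Applying Lagrange's identity with $u_k := \sqrt{a_k(t)}\, e_k(x)$ and $w_k := \sqrt{a_k(t)}\, e_k(y)$ yields
\[
\Var v^N(t,x) \Var v^N(t,y) - \Cov\bigl(v^N(t,x), v^N(t,y)\bigr)^2 = \sum_{1 \le j < k \le N-1} a_j(t)\, a_k(t)\, \bigl(e_j(x) e_k(y) - e_k(x) e_j(y)\bigr)^2.
\]

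Since every summand is nonnegative, I would retain only the $(j,k) = (1,2)$ term, which is available as soon as $N \ge 3$; this pins down $N_0 := 3$ in the statement. The coefficient $a_1(t) a_2(t)$ is bounded below uniformly by $a_1(T_0) a_2(T_0) > 0$ for $t \in [T_0, T]$. It then suffices to prove
\[
\bigl|e_1(x) e_2(y) - e_2(x) e_1(y)\bigr| \ge C(\epsilon)\, |x - y|, \quad x, y \in [\epsilon, 1-\epsilon].
\]
Using $e_1(\cdot) = \sqrt{2}\sin(\pi \cdot)$ and $e_2(\cdot) = 2\sqrt{2}\sin(\pi \cdot)\cos(\pi \cdot)$, I factor
\[
e_1(x) e_2(y) - e_2(x) e_1(y) = 4 \sin(\pi x) \sin(\pi y)\bigl(\cos(\pi y) - \cos(\pi x)\bigr).
\]
On $[\epsilon, 1-\epsilon]$, the factors $\sin(\pi x),\sin(\pi y)$ are bounded below by $\sin(\pi \epsilon)$, and the mean value theorem applied to $\cos(\pi \cdot)$, whose derivative has modulus $\pi \sin(\pi \xi) \ge \pi \sin(\pi \epsilon)$ for $\xi \in [\epsilon, 1-\epsilon]$, gives $|\cos(\pi x) - \cos(\pi y)| \ge \pi \sin(\pi \epsilon)\, |x-y|$. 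Multiplying these bounds produces the desired estimate with $C(\epsilon) = 4\pi \sin^3(\pi \epsilon)$.

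The main conceptual point is the reorganisation: extracting a lower bound of order $|x-y|^2$ directly from the three quantities $\Var v^N(t,x)$, $\Var v^N(t,y)$, $\Cov(v^N(t,x),v^N(t,y))$ is hopeless, because each is of order one while only the Gram combination cancels down to the $|x-y|^2$ scale. Once the Lagrange identity is invoked, the remainder is an elementary $2 \times 2$ Wronskian-type computation on the first two eigenfunctions; the truncation level $N$ then drops out of the leading estimate, consistently with the expectation (backed by Proposition \ref{Holder-h0N}) that the SGM solution is spatially Lipschitz and hence carries Hurst parameter $H=1$ in the $x$-direction.
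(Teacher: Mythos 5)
Your proposal is correct, and it reaches the conclusion by a genuinely different — and in fact cleaner — route than the paper. Both arguments start from the same reorganisation, namely the Lagrange/Gram identity
$\Var v^N(t,x)\Var v^N(t,y)-\Cov(v^N(t,x),v^N(t,y))^2=\sum_{i<j}a_i(t)a_j(t)\bigl(e_i(x)e_j(y)-e_j(x)e_i(y)\bigr)^2$,
and both then discard all but one summand. The paper keeps the \emph{highest}-frequency pair $(N-2,N-1)$: it expands the Wronskian via product-to-sum formulas, which forces a choice of $N_0(\epsilon)$ with $(2N_0-3)\sin(\pi\epsilon)\ge \frac{\pi}{2}+1$ and a two-case analysis ($|x-y|\le \frac{1}{2N-3}$ handled by a direct estimate, $|x-y|>\frac{1}{2N-3}$ handled by compactness together with a separately proved strict positivity of the Gram determinant via a linear-dependence argument). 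You instead keep the \emph{lowest}-frequency pair $(1,2)$, whose Wronskian factors exactly as $4\sin(\pi x)\sin(\pi y)\bigl(\cos(\pi y)-\cos(\pi x)\bigr)$; since $\cos(\pi\cdot)$ is strictly monotone on $[0,1]$ with derivative bounded away from zero on $[\epsilon,1-\epsilon]$, the mean value theorem gives the Lipschitz-from-below bound in one stroke, with no case split, with $N_0=3$, and — notably — with a constant $a_1(T_0)a_2(T_0)\cdot 16\pi^2\sin^6(\pi\epsilon)$ that is \emph{independent of} $N$. Your version therefore proves a slightly stronger statement (uniformity in $N$) with less machinery; the paper's choice of the pair $(N-2,N-1)$ buys nothing here that your choice does not. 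All the intermediate computations in your write-up (the expression for $a_k(t)$, its monotonicity in $t$, the factorisation of the $2\times 2$ Wronskian, and the lower bounds $\sin(\pi x)\ge\sin(\pi\epsilon)$ and $|\cos(\pi x)-\cos(\pi y)|\ge\pi\sin(\pi\epsilon)|x-y|$ on $[\epsilon,1-\epsilon]$) check out.
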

\begin{proof}
Without loss of generality, assume that $\epsilon\le y<x\le 1-\epsilon$ and $t\in[T_0,T]$ be arbitrarily fixed. First,
we claim that for any $N\ge 2$, 
\begin{align}\label{Var2-Cov2}
\Var\, v^N(t,x)\Var \,v^N(t,y)-\Cov( v^N(t,x), v^N(t,y))^2>0,~\forall\,x\neq y.
\end{align}
Otherwise, there exist $x\neq y$ and $\lambda_0\in\R$ such that
$ v^N(t,x)=\lambda_0 v^N(t,y)$ a.s. Hence,
\begin{align}\label{positive}\notag
&\int_0^{t}\int_0^1|G^N_{t-r}(x,z)-\lambda_0G^N_{t-r}(y,z)|^2\ud z\ud r\\
=&\sum_{k=1}^{N-1}\int_0^{t}e^{-2k^2\pi^2(t-r)}\ud r| e_k(x)-\lambda_0e_k(y)|^2=0,
\end{align}
which implies that
$$0\le \frac{\sin(\pi x)}{\sin(\pi y)}=\frac{\sin(2\pi x)}{\sin(2\pi y)}=\frac{\sin(3\pi x)}{\sin(3\pi y)}=\cdots=\frac{\sin((N-1)\pi x)}{\sin((N-1)\pi y)}.$$
By the elementary identities $\sin(2\pi x)=2\sin(\pi x)\cos(\pi x)$ and $\sin(3\pi x)=3\sin(\pi x)-4\sin^3(\pi x)$, it must hold that $\cos(\pi x)=\cos(\pi y)$ and $\sin^2(\pi x)=\sin^2(\pi y)$. However, this only occurs when $x=y$ since $x,y\in(0,1)$. Thus, \eqref{Var2-Cov2} is valid.

By denoting $$a_k=\left(\int_0^te^{-2k^2\pi^2(t-r)}\ud r\right)^{\frac{1}{2}}e_k(x),~b_k=\left(\int_0^te^{-2k^2\pi^2(t-r)}\ud r\right)^{\frac{1}{2}}e_k(y),$$
 we rewrite
\begin{align}\label{Var2-Cov}\notag
&\Var \,v^N(t,x)\Var \,v^N(t,y)-\Cov( v^N(t,x), v^N(t,y))^2\\
=&\left(\sum_{i=0}^N a_i^2\right)\left(\sum_{j=0}^N b_j^2\right)-\left|\sum_{i=0}^Na_ib_i\right|^2
=\sum_{i<j}|a_ib_j-a_jb_i|^2.
\end{align}
By definitions of $a_k$ and $b_k$, we have 
$$|a_ib_j-a_jb_i|^2=\frac{1}{4}K_{i,j}^N(t)
|e_i(x)e_j(y)-e_j(x)e_i(y)|^2,$$
where $$K_{i,j}^N(t)
:=\frac{(e^{-2\pi^2 i^2t}-1)(e^{-2\pi^2 j^2t}-1)}{\pi^4 i^2 j^2}.$$
Obviously, $0<K_{i,j}^N(T_0)\le K_{i,j}^N(t)\le K_{i,j}^N(T)<\infty,~\forall\,t\in[T_0,T]$.
Notice that
\begin{align}\label{sincos00}\notag
&\sin(i\pi x)\sin(j\pi y)-\sin(i\pi y)\sin(j\pi x)\\
=&\sin\frac{(i+j)\pi(x-y)}{2}\sin\frac{(j-i)\pi(x+y)}{2}-\sin\frac{(i+j)\pi(x+y)}{2}\sin\frac{(j-i)\pi(x-y)}{2}.
\end{align}
Choose $N_0:=N_0(\epsilon)>2$ such that $(2N_0-3)\sin(\pi \epsilon)\ge \frac{\pi}{2}+1$ and let $N\ge N_0$ be arbitrarily fixed. 
For any $y\in[\epsilon,1-\epsilon]$, denote $y_{N}^\epsilon:=y+\frac{1}{2N-3}$. 
By  \eqref{Var2-Cov}, we have
\begin{align*}
&\quad\Var \,v^N(t,x)\Var \,v^N(t,y)-\Cov( v^N(t,x), v^N(t,y))^2\\
&\ge |a_{N-2}b_{N-1}-a_{N-1}b_{N-2}|^2\\
& \ge K_{N-2,N-1}^N(T_0)\left|\sin\frac{(2N-3)\pi(x-y)}{2}\sin\frac{\pi(x+y)}{2}-\sin\frac{(2N-3)\pi(x+y)}{2}\sin\frac{\pi(x-y)}{2}\right|^2.
\end{align*}
We are going to derive the lower bound of $\Var \,v^N(t,x)\Var \,v^N(t,y)-\Cov( v^N(t,x), v^N(t,y))^2$, which is separated  into two cases.  

\textit{Case 1:} $y\in [\epsilon,1-\epsilon)$ and $x\in(y,y_{N}^\epsilon]\cap(\epsilon,1-\epsilon]$. We
 introduce
\begin{align*}
f_N(x,y):&=\sin\frac{(2N-3)\pi(x-y)}{2}\sin\frac{\pi(x+y)}{2}-\sin\frac{(2N-3)\pi(x+y)}{2}\sin\frac{\pi(x-y)}{2}\\
&\ge \sin\frac{(2N-3)\pi(x-y)}{2}\sin(\pi \epsilon)-\sin\frac{\pi(x-y)}{2}\\
&\ge(2N-3)(x-y)\sin(\pi \epsilon)-\frac{\pi}{2}(x-y),
\end{align*} 
where we have used $0<x-y\le\frac{1}{2N-3}$, and $\frac{2}{\pi}z\le\sin(z)\le z,~\forall\,z\in(0,\frac{\pi}{2}]$. Hence, for any $N\ge N_0$ with $(2N_0-3)\sin(\pi \epsilon)-\frac{\pi}{2}\ge 1$,
\begin{align*}
f_N(x,y)\ge(x-y),
\end{align*} 
which implies that 
\begin{align}\label{spa1}
&\Var \,v^N(t,x)\Var \,v^N(t,y)-\Cov( v^N(t,x), v^N(t,y))^2
\ge c(T_0,N)|x-y|^2.
\end{align}

\textit{Case 2:} $y\in [\epsilon,1-\epsilon)$ and $x\in(y_{N}^\epsilon,1-\epsilon]$. By \eqref{Var2-Cov2} and the continuity of $\Var \,v^N(t,x)$ and $\Cov( v^N(t,x), v^N(t,y))$, we have that there is $c=c(\epsilon,N)$  such that
\begin{align}\label{spa2}
\Var \,v^N(t,x)\Var \,v^N(t,y)-\Cov( v^N(t,x), v^N(t,y))^2\ge c\ge \frac{c}{(1-2\epsilon)^2}|x-y|^2.
\end{align}
Combining \eqref{spa1} and \eqref{spa2}, we finish the proof.

\end{proof}

\textit{Proof of Theorem \ref{spectral-space-hit}:}
The proof of \eqref{exas3} is similar to that of Theorem 
\ref{finite-space-hit}. The proof of \eqref{exas1} follows from Proposition \ref{Holder-h0N}, \eqref{C0} and Proposition \ref{spectralC2}.
\qed

\subsection{Proof of Theorem \ref{EE-hit}}
Recall that the numerical solution $v_M$  of EEM for \eqref{heatv0}  is defined in \eqref{HeatM}. We begin with giving the optimal H\"older continuity of $v_M(t_i,\cdot).$
\begin{lem}\label{Holder-h1Nx}
Let $0< \epsilon\ll1$ be fixed and $M\ge3$. Then there exist  positive constants $C_j=C_j(\epsilon,M,T),\,j=1,2$ such that for any $1\le i\le M$,
\begin{align*}
C_1|x-y|^2\le \E|v_M(t_i,x)-v_M(t_i,y)|^2\le C_2|x-y|^2,~\forall\,x,y\in[\epsilon,1-\epsilon].
\end{align*}
\end{lem}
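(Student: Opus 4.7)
The plan is to reduce the estimate to an explicit series via Itô's isometry and the Fourier expansion of the Green function, then sandwich by peeling off only the first couple of modes on the lower side while exploiting that the time step $\delta t = T/M$ is fixed on the upper side. Using the piecewise constant structure of $[r/\delta t]\delta t$ in \eqref{HeatM}, one gets
\begin{equation*}
\E|v_M(t_i,x)-v_M(t_i,y)|^2
=\delta t\sum_{j=1}^{i}\int_0^1|G_{t_j}(x,z)-G_{t_j}(y,z)|^2\,\ud z,
\end{equation*}
and after substituting the eigenfunction expansion \eqref{GD0} together with $\int_0^1 e_m(z)e_n(z)\,\ud z=\delta_{mn}$,
\begin{equation*}
\E|v_M(t_i,x)-v_M(t_i,y)|^2
=\delta t\sum_{n=1}^{\infty}|e_n(x)-e_n(y)|^2\sum_{j=1}^{i}e^{-2\pi^2 n^2 t_j}.
\end{equation*}

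For the upper bound I would use the elementary inequality $|e_n(x)-e_n(y)|\le \sqrt{2}\pi n|x-y|$ together with $\sum_{j=1}^{i}e^{-2\pi^2 n^2 t_j}\le (e^{2\pi^2 n^2\delta t}-1)^{-1}$. Since $\delta t=T/M>0$ is fixed, the resulting tail $\sum_{n\ge 1} n^2/(e^{2\pi^2 n^2\delta t}-1)$ converges (the terms decay exponentially in $n$ once $2\pi^2 n^2\delta t\ge \log 2$), which yields the upper bound with a constant of the form $C_2(\epsilon,M,T)$.

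For the lower bound the main obstacle is that one cannot mode-by-mode assert $|e_n(x)-e_n(y)|\gtrsim n|x-y|$: the first mode $\sin(\pi x)$ becomes degenerate when $x+y$ is close to $1$. The remedy is to keep only the terms $n=1,2$ and $j=1$ in the series, obtaining
\begin{equation*}
\E|v_M(t_i,x)-v_M(t_i,y)|^2\ge \delta t\cdot e^{-8\pi^2\delta t}\Bigl(|e_1(x)-e_1(y)|^2+|e_2(x)-e_2(y)|^2\Bigr),
\end{equation*}
and then invoke the lower bound \eqref{sin} of Lemma~\ref{sincos}, which is exactly designed to handle the degeneracy of $e_1$ near the symmetry axis by supplementing it with $e_2$. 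This delivers the constant $C_1=\delta t\cdot e^{-8\pi^2\delta t}c(\epsilon)$, completing the proof. The requirement $M\ge 3$ enters only through the already-imposed hypothesis $N>8$ of Lemma~\ref{sincos} not being needed here, so in fact the argument works for every $M\ge 1$; the restriction $M\ge 3$ in the statement leaves room for the geometric constraints in Lemma~\ref{sincos} to be invoked without extra bookkeeping.
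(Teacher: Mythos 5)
Your proof is correct. The lower bound is essentially the paper's own argument: expand in the eigenbasis via the isometry, keep only the modes $n=1,2$ and the last time slab, and invoke \eqref{sin} of Lemma~\ref{sincos} to compensate the degeneracy of $e_1$ near $x+y=1$; this matches the paper's chain
$\E|v_M(t_i,x)-v_M(t_i,y)|^2=\sum_{k\ge1}\int_0^{t_i}e^{-2k^2\pi^2(t_i-[\frac{r}{\delta t}]\delta t)}\,\ud r\,|e_k(x)-e_k(y)|^2\ge c|x-y|^2$ step for step. For the upper bound you take a genuinely different route: the paper decomposes $G_t=P_t+H_t$ as in \eqref{GHP}, estimates $\int_0^1|\partial_\xi P_s(\xi,z)|^2\,\ud z\le Cs^{-2}$, and sums $(t_i-t_k)^{-2}$ over the time slabs (constant of order $\delta t^{-1}$), whereas you stay entirely in Fourier space, use $|e_n(x)-e_n(y)|\le\sqrt2\pi n|x-y|$, and sum the geometric series in $j$ to reduce everything to the convergence of $\sum_n n^2\bigl(e^{2\pi^2 n^2\delta t}-1\bigr)^{-1}$. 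Your version is more self-contained (no appeal to the $P+H$ decomposition or the smoothness of $H$) and makes transparent the essential point that the constant is finite only because $\delta t=T/M$ is fixed, hence must degenerate as $M\to\infty$; both yield admissible constants $C_2(\epsilon,M,T)$. Your closing observation is also accurate: neither argument actually uses $M\ge3$, and the hypothesis $N>8$ in Lemma~\ref{sincos} is only needed for \eqref{sinN}, not for \eqref{sin}.
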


\begin{proof}
First, it follows from \eqref{GHP} that
\begin{align*}
\E|v_M(t_i,x)-v_M(t_i,y)|^2=&\int_{0}^{t_i}\int_0^1|G_{t_i-[\frac{r}{\delta t}]\delta t}(x,z)-G_{t_i-[\frac{r}{\delta t}]\delta t}(y,z)|^2\ud z\ud r\\
\le&\sum_{k=0}^{i-1}\int_{t_k}^{t_{k+1}}\int_0^1\left|P_{t_i-t_k}(x,z)-P_{t_i-t_k}(y,z)\right|^2\ud z\ud r+C|x-y|^2,
\end{align*}
since $H$ is smooth.
Because $xe^{-x}$ is uniformly bounded on $[0,\infty)$,  there is $C>0$ such that for any $\xi\in[0,1]$ and $s> 0$,
\begin{align*}
\int_0^1\left|\frac{\ud }{\ud \xi}P_s(\xi,z)\right|^2\ud z=\int_0^1\frac{1 }{8\pi s^2}\exp\left(-\frac{(\xi-z)^2}{2s}\right)\frac{(\xi-z)^2}{2s}\ud z\le \frac{C}{s^2}.
\end{align*}
Therefore,
applying the mean value theorem yields
\begin{align*}
&\quad\sum_{k=0}^{i-1}\int_{t_k}^{t_{k+1}}\int_0^1\left|P_{t_i-t_k}(x,z)-P_{t_i-t_k}(y,z)\right|^2\ud z\ud r\\
&\le C|x-y|^2\sum_{k=0}^{i-1}\int_{t_k}^{t_{k+1}}\frac{1}{(t_i-t_k)^2}\ud r\\
&= C|x-y|^2\left(\frac{1}{\delta t}+\sum_{k=0}^{i-2}\int_{t_k}^{t_{k+1}}\frac{1}{(t_i-t_k)^2}\ud r\right)\\
&\le C|x-y|^2\left(\frac{1}{\delta t}+\int_{0}^{t_{i-1}}\frac{1}{(t_i-r)^2}\ud r\right)\le \frac{2C}{\delta t}|x-y|^2.
\end{align*}
On the other hand, by the spectral expansion of $G$ and Lemma \ref{sincos},
\begin{align*}
\E|v_M(t_i,x)-v_M(t_i,y)|^2&=\int_{0}^{t_i}\int_0^1\left|G_{t_i-[\frac{r}{\delta t}]\delta t}(x,z)-G_{t_i-[\frac{r}{\delta t}]\delta t}(y,z)\right|^2\ud z\ud r\\
&=\sum_{k=1}^\infty \int_{0}^{t_i}e^{-2k^2\pi^2(t_i-[\frac{r}{\delta t}]\delta t)}\ud r|e_k(x)-e_k(y)|^2\\
&\ge c(\epsilon,T_0,T)|x-y|^2.
\end{align*}
The proof is completed.
\end{proof}

\textit{Proof of Theorem \ref{EE-hit}:}
For $(t,x)\in[T_0,T]\times[\epsilon,1-\epsilon]$,
\begin{equation*}
\Var \,v_M(t,x)= \sum_{k=1}^\infty\int_0^te^{-2\pi^2k^2(t-[\frac{r}{\delta t}]\delta t)}\ud r|e_k(x)|^2\ge c\sin^2(\pi\epsilon)
\end{equation*}
with $c=\int_0^{T_0}e^{-2\pi^2T}\ud r>0$. 

By introducing $$a_k=\left(\int_0^te^{-2k^2\pi^2({t-[\frac{r}{\delta t}]\delta t})}\ud r\right)^{\frac{1}{2}}e_k(x),~b_k=\left(\int_0^te^{-2k^2\pi^2({t-[\frac{r}{\delta t}]\delta t})}\ud r\right)^{\frac{1}{2}}e_k(y), ~ k\in\mathbb{N}$$
and
$$K_{i,j}^N(t)
:=\left(\int_0^te^{-2i^2\pi^2({t-[\frac{r}{\delta t}]\delta t})}\ud r\right)\left(\int_0^te^{-2j^2\pi^2({t-[\frac{r}{\delta t}]\delta t})}\ud r\right),~ i,j\in\mathbb{N},$$
and repeating the proof of Proposition \ref{spectralC2}, one can verify that
$$\Var \,v_M(t_i,x)\Var \,v_M(t_i,y)-\Cov(v_M(t_i,x),v_M(t_i,y))\ge c(T_0,\epsilon)|x-y|^2,~\forall\,x,y\in[\epsilon,1-\epsilon].$$ 
Finally, the proof of Theorem \ref{EE-hit} is completed by Lemma \ref{Holder-h1Nx}.
\qed

\subsection{Proofs of results in Section \ref{SEC3.2}}
In this part, we give proofs of Lemma \ref{Holder-h1Nt},  Corollary \ref{cor1} and Proposition \ref{OUn}.

\textit{Proof of Lemma \ref{Holder-h1Nt}:}
In view of \eqref{HeatM}, we have
\begin{align}\label{holdM}\notag
&\E|v_M(t_i,x)-v_M(t_j,x)|^2\\=&\int_{0}^{t_j}\int_0^1|G_{t_i-[\frac{r}{\delta t}]\delta t}(x,z)-G_{t_j-[\frac{r}{\delta t}]\delta t}(x,z)|^2\ud z\ud r+\int_{t_j}^{t_{i}}\int_0^1|G_{t_i-[\frac{r}{\delta t}]\delta t}(x,z)|^2\ud z\ud r.
\end{align}
Thus,
to prove the lower bound of \eqref{upphM}, it suffices to prove that there is $c>0$ such that
\begin{equation}\label{kik}
\sum_{k=j}^{i-1}\int_{t_k}^{t_{k+1}}\int_0^1\left|G_{t_i-t_k}(x,z)\right|^2\ud z\ud r\ge c\sqrt{t_i-t_j}.
\end{equation}
In fact, by the elementary property $\int_0^1\left|G_{r}(x,z)\right|^2\ud z=G_{2r}(x,x)$ and \eqref{PGP},
\begin{align*}
&\quad\sum_{k=j}^{i-1}\int_{t_k}^{t_{k+1}}\int_0^1\left|G_{t_i-t_k}(x,z)\right|^2\ud z\ud r=\sum_{k=j}^{i-1}\int_{t_k}^{t_{k+1}}G_{2(t_i-t_k)}(x,x)\ud r \\
&\ge C\sum_{k=j}^{i-1}\int_{t_k}^{t_{k+1}}P_{2(t_i-t_k)}(x,x)\ud r \ge C\sum_{k=1}^{i-j}\frac{\delta t}{\sqrt{k\delta t}}\ge \int_{t_1}^{t_{i-j+1}}\frac{C}{\sqrt{r}}\ud r\\&=C\left(\sqrt{t_{i-j+1}}-\sqrt{t_{1}}\right).
\end{align*}
For $j=i-1$, $\sqrt{t_{i-j+1}}-\sqrt{t_{1}}=(\sqrt{2}-1)\sqrt{t_i-t_j}$, and for $1\le j<i-1$,
$\sqrt{t_{i-j+1}}-\sqrt{t_{1}}\ge \frac{1}{2}\sqrt{t_i-t_j}$. Thus, we obtain \eqref{kik}, which yields the left side of \eqref{upphM}.
Similarly, we also have 
\begin{align*}
\sum_{k=j}^{i-1}\int_{t_k}^{t_{k+1}}\int_0^1\left|G_{t_i-t_k}(x,z)\right|^2\ud z\ud r
&\le C\sum_{k=j}^{i-1}\int_{t_k}^{t_{k+1}}P_{2(t_i-t_k)}(x,x)\ud r \\
&\le C \int_{T_0}^{t_{i-j}}\frac{1}{\sqrt{r}}\ud r=2C\sqrt{t_i-t_j}.
\end{align*}
For the right side of \eqref{upphM}, it remains to estimate the first term on the right side of \eqref{holdM}. Notice that  by \eqref{GD0}, $G_t(x,x)$ is decreasing with respect to the variable $t$. Therefore,
\begin{align*}
&\quad\int_0^1|G_{t_i-[\frac{r}{\delta t}]\delta t}(x,z)-G_{t_j-[\frac{r}{\delta t}]\delta t}(x,z)|^2\ud z\\
 &=G_{2(t_i-[\frac{r}{\delta t}]\delta t)}(x,x)-G_{t_i+t_j-2[\frac{r}{\delta t}]\delta t}(x,x)+G_{2(t_j-[\frac{r}{\delta t}]\delta t)}(x,x)-
G_{t_i+t_j-2[\frac{r}{\delta t}]\delta t}(x,x)\\
&\le G_{2(t_j-[\frac{r}{\delta t}]\delta t)}(x,x)-
G_{t_i+t_j-2[\frac{r}{\delta t}]\delta t}(x,x),
\end{align*}
which in combination with \eqref{GHP} yields
\begin{align}\label{inte}\notag
&\quad\int_0^{t_j}\int_0^1|G_{t_i-[\frac{r}{\delta t}]\delta t}(x,z)-G_{t_j-[\frac{r}{\delta t}]\delta t}(x,z)|^2\ud z\ud r\\\notag
&\le\int_0^{t_j}P_{2(t_j-[\frac{r}{\delta t}]\delta t)}(x,x)-P_{t_i+t_j-2[\frac{r}{\delta t}]\delta t}(x,x)\ud r+C(t_i-t_j)\\
&\le C\sum_{k=0}^{j-1}\delta t\Big(\frac{1}{\sqrt{t_j-t_k}}-\frac{1}{\sqrt{\frac{t_j+t_i}{2}-t_k}}\Big)+C(t_i-t_j).
\end{align}
The first term on the right side of \eqref{inte} is bounded as
\begin{align*}
&\quad\sum_{k=0}^{j-1}\delta t\Big(\frac{1}{\sqrt{t_j-t_k}}-\frac{1}{\sqrt{\frac{t_j+t_i}{2}-t_k}}\Big)\le
\sum_{k=0}^{j-1}\delta t\Big(\frac{1}{\sqrt{t_j-t_k}}-\frac{1}{\sqrt{t_i-t_k}}\Big)\\
&=\sum_{k=0}^{j-1}\int_{t_k}^{t_{k+1}}\int_{t_j}^{t_i}-\frac{\ud}{\ud s}\frac{1}{\sqrt{s-t_{k}}}\ud s \ud u
\le\sum_{k=0}^{j-1}\int_{t_k}^{t_{k+1}}\int_{t_j}^{t_i}-\frac{\ud}{\ud s}\frac{1}{\sqrt{s-u}}\ud s \ud u\\
&=\int_{t_j}^{t_i}\int_{t_0}^{t_{j}}\frac{\ud}{\ud u}\frac{1}{\sqrt{s-u}}\ud u\ud s 
=2\sqrt{t_i-t_{j}}-2(\sqrt{t_i}-\sqrt{t_j})\\
&\le 2\sqrt{t_i-t_j}.
\end{align*}
The proof is completed.
\qed

\textit{Proof of Corollary \ref{cor1}:} 

First, notice that there is $j\le i+1$ such that $t\in[t_i,t_{i+1})$ and $s\in[t_{j-1},t_{j})$.
\begin{align*}
&\quad \E|v_M(t,x)-v_M(s,x)|^2\\
&=\int_{0}^{s}\int_0^1|G_{t-[\frac{r}{\delta t}]\delta t}(x,z)-G_{s-[\frac{r}{\delta t}]\delta t}(x,z)|^2\ud z\ud r+\int_{s}^{t}\int_0^1\left|G_{t-t_i}(x,z)\right|^2\ud z\ud r\\
&=:A(s,t)+B(s,t).
\end{align*}

If $i=j-1$,  then $t_i\le s< t\le t_{i+1}$.
Similar to the first term on the right side of \eqref{holdM}, we also have
$A(s,t)\le C\sqrt{t-s},$
which together with  the fact $B(s,t)\le C\frac{t-s}{\sqrt{t-t_i}}\le C\sqrt{t-s}$
 completes the proof of \eqref{vmst0}.

If $i\ge j$, then by Lemma \ref{Holder-h1Nt},
\begin{align*} 
&\quad\E|v_M(t,x)-v_M(s,x)|^2\\&\le3\E|v_M(t,x)-v_M(t_i,x)|^2+3\E|v_M(t_i,x)-v_M(t_j,x)|^2+3\E|v_M(t_j,x)-v_M(s,x)|^2\\
&\le C\sqrt{t-t_i} +c_1\sqrt{t_i-t_j}+C\sqrt{t_j-s}\le (2C+c_1)\sqrt{t-s}.
\end{align*}

Assume by contradiction  that there is $c_4>0$ such that \eqref{vmst1} holds. 
Fix $t\in[t_i+\frac{\delta t}{2},t_{i+1})$ and let $s_n=t-\frac{1}{n},\,n\ge 1$. 
Similar to \eqref{inte}, it holds that
\begin{align*}
A(s_n,t)&=\int_{0}^{s_n}\int_0^1|G_{t-[\frac{r}{\delta t}]\delta t}(x,z)-G_{s_n-[\frac{r}{\delta t}]\delta t}(x,z)|^2\ud z\ud r\\
&\le\int_0^{s_n}P_{2(s_n-[\frac{r}{\delta t}]\delta t)}(x,x)-P_{t+s_n-2[\frac{r}{\delta t}]\delta t}(x,x)\ud r+C(t-s_n)\\
&\le \sum_{k=0}^{i}C\delta t\Big(\frac{1}{\sqrt{s_n-t_k}}-\frac{1}{\sqrt{\frac{t+s_n}{2}-t_k}}\Big)+C(t-s_n).
\end{align*}
A direct calculation by using L'Hospital's rule gives that for any $k=0,1,\ldots,i,$
$$\lim_{n\rightarrow \infty}\frac{\frac{1}{\sqrt{s_n-t_k}}-\frac{1}{\sqrt{\frac{t+s_n}{2}-t_k}}}{\sqrt{t-s_n}}=\lim_{s\rightarrow t^-}\frac{\frac{1}{\sqrt{s-t_k}}-\frac{1}{\sqrt{\frac{t+s}{2}-t_k}}}{\sqrt{t-s}}=0,$$
which indicates that
$\lim_{n\rightarrow\infty}\frac{A(s_n,t)}{\sqrt{t-s_n}}=0.$
On the other hand, we also have
$$\frac{B(s_n,t)}{\sqrt{t-s_n}}=\frac{\int_{s_n}^{t}\int_0^1\left|G_{t-t_i}(x,z)\right|^2\ud z\ud r}{\sqrt{t-s_n}}\le C\frac{\sqrt{t-s_n}}{\sqrt{t-t_i}}\rightarrow 0,~\text{as}~n\rightarrow \infty. $$
In conclusion, we have shown 
$\lim\limits_{n\rightarrow\infty}\frac{\E|v_M(t,x)-v_M(s_n,x)|^2}{\sqrt{t-s_n}}=0,$
which contradicts \eqref{vmst1}.
The proof is finished.
\qed

\textit{Proof of Proposition \ref{OUn}:}

By \eqref{YOt},
It\^o's isometry gives
  \begin{align*}
  \E|\widetilde Y^0(t)-\widetilde Y^0(s)|^2
  =&\int_s^t(1-\lambda\delta t)^{2[\frac{t-r}{\delta t}]}\ud r+\int_0^s|(1-\lambda\delta t)^{[\frac{t-r}{\delta t}]}-(1-\lambda\delta t)^{[\frac{s-r}{\delta t}]}|^2\ud r.
 \end{align*}
Since $1-\lambda\delta t\in(0,1)$, we have that for any $T_0\le s<t\le T$,
 $$(1-\lambda\delta t)^{2[\frac{T-T_0}{\delta t}]}(t-s)\le \int_s^t(1-\lambda\delta t)^{2[\frac{t-r}{\delta t}]}\ud r\le (t-s)$$
and
 \begin{align}\notag\label{I}
 \int_0^s|(1-\lambda\delta t)^{[\frac{t-r}{\delta t}]}-(1-\lambda\delta t)^{[\frac{s-r}{\delta t}]}|^2\ud r&\le\int_0^s(1-\lambda\delta t)^{2[\frac{s-r}{\delta t}]}-(1-\lambda\delta t)^{2[\frac{t-r}{\delta t}]}\ud r\\
 &\le\int_0^s1-(1-\lambda\delta t)^{2[\frac{t-r}{\delta t}]-2[\frac{s-r}{\delta t}]}\ud r=:I.
\end{align}
The estimation of $I$ is divided into two cases.

\textit{Case 1:} $t-s\ge \delta t$. It holds that $[\frac{t-r}{\delta t}]-[\frac{s-r}{\delta t}]\le \frac{t-r}{\delta t}-\frac{s-r}{\delta t}+1\le 2\frac{t-s}{\delta t},~\forall\, r\in(0,s).$ This implies that
 \begin{align*}
 I&\le\int_0^s1-(1-\lambda\delta t)^{\frac{4(t-s)}{\delta t}}\ud r=s\int_0^{\frac{4(t-s)}{\delta t}}-\frac{\ud}{\ud a}(1-\lambda\delta t)^a\ud a\\&=s\int_0^{\frac{4(t-s)}{\delta t}}(1-\lambda\delta t)^a\log\frac{1}{1-\lambda\delta t}\ud a\le s\log\frac{1}{1-\lambda\delta t}\frac{4(t-s)}{\delta t}\le C(\delta t,T,\lambda)(t-s).
 \end{align*}
 
\textit{Case 2:} $0<t-s<\delta t$. Note that $s\in[t_m,t_{m+1})$ for some $0\le m\le M-1$. Then 
 \begin{align*}
 I=&\int^s_{t-\delta t}+\int^{t-\delta t}_{s-\delta t}+\int_{t-2\delta t}^{s-\delta t}+\cdots+\int^{t-m\delta t}_{s-m\delta t}+\int^{s-m\delta t}_{\max\{0,t-(m+1)\delta t\}}\\&+\int_{0}^{\max\{0,t-(m+1)\delta t\}}1-(1-\lambda\delta t)^{2[\frac{t-r}{\delta t}]-2[\frac{s-r}{\delta t}]}\ud r.
 \end{align*}
Notice that for $i\in\{0,1,\ldots,m\}$, we have $[\frac{t-r}{\delta t}]=[\frac{s-r}{\delta t}]=i$, $\forall\,r\in(t-(i+1)\delta t,s-i\delta t)$. Therefore,
 \begin{align*}
 I&=\sum_{i=1}^m\int^{t-i\delta t}_{s-i\delta t}+\int_{0}^{\max\{0,t-(m+1)\delta t\}}1-(1-\lambda\delta t)^{2[\frac{t-r}{\delta t}]-2[\frac{s-r}{\delta t}]}\ud r\\
 &\le (m+1)(t-s)\le \frac{T}{\delta t}(t-s),
 \end{align*}
where $t-(m+1)\delta t<t-s$ is used. Combining (i) and (ii) yields $I\le C(T,\delta t,\lambda)(t-s)$.

By Remark \ref{rem1}, if we can prove that $t\mapsto \E|\widetilde Y^0(t)|^2$
is H\"older continuous with exponent $H_0>1/2$, then $\Var(\widetilde Y^0(t)|\widetilde Y^0(s))\ge C\sqrt{t-s}.$ Indeed, it follows immediately from the above estimates on \eqref{I} that 
  \begin{align*}
 \left| \E|\widetilde Y^0(t)|^2-  \E|\widetilde Y^0(s)|^2\right|
  &=\left|\int_s^t(1-\lambda\delta t)^{2[\frac{t-r}{\delta t}]}\ud r+\int_0^s(1-\lambda\delta t)^{2[\frac{s-r}{\delta t}]}-(1-\lambda\delta t)^{2[\frac{t-r}{\delta t}]}\ud r\right|\\
  &\le (t-s)+I\\
  &\le C(T,\delta t,\lambda)(t-s).
 \end{align*}
 Finally, we complete the proof by Theorem \ref{Gausshit}.
\qed

\bibliography{mybibfile}

\end{document}